\documentclass[10pt]{amsart}
\usepackage{amsmath, amsthm, amsfonts, amssymb}
\usepackage{mathrsfs}
\providecommand{\noopsort[1]{}}
\usepackage{bbm}
\usepackage{dsfont}
\usepackage{ifthen}
\numberwithin{equation}{section}
\usepackage{a4}
\usepackage[active]{srcltx}
\usepackage{verbatim}
\usepackage{hyperref}

 \usepackage[fleqn,tbtags]{mathtools}
 \mathtoolsset{showonlyrefs,showmanualtags}


\newtheorem{thm}{Theorem}[section]

\newtheorem{lem}[thm]{Lemma}

\theoremstyle{remark}
\newtheorem{rem}[thm]{Remark}

\newtheorem{example}[thm]{Example}

\theoremstyle{definition}
\newtheorem{defn}[thm]{Definition}

\renewcommand{\div}{\mathrm{div}\,}

\newcommand{\eps}{\varepsilon}

\newcommand{\one}{\mathbbm{1}}

\newcommand{\form}[3]{\ifthenelse{\equal{#2}{}}{\mbox{$ #1\Big[\, \cdot\,  , \, \cdot\,  \Big]$}}{
\mbox{$ #1\Big[ #2 , #3 \Big]$}}}
\newcommand{\qform}[2]{\ifthenelse{\equal{#2}{}}{\mbox{$ #1\Big[\cdot \Big]$}}{
\mbox{$ #1\Big[ #2 \Big]$}}}
\newcommand{\ip}[2]{\ifthenelse{\equal{#1}{}}{\mbox{$ \Big( \,\cdot\; \vline \; \cdot \, \Big) $}}{
\mbox{$ \Big( #1 \;  \vline \; #2 \Big)$}}}
\newcommand{\norm}[1]{\ifthenelse{\equal{#1}{}}{\mbox{$\|\cdot\|$}}{\mbox{$\| #1 \|$}}}
\newcommand{\betr}[1]{\ifthenelse{\equal{#1}{}}{\mbox{$|\cdot|$}}{\mbox{$| #1 |$}}}
\newcommand{\dual}[2]{\ifthenelse{\equal{#1}{}}{\mbox{$ \langle \cdot,\cdot \, \rangle $}}{
\mbox{$ \langle #1 , #2 \rangle$}}}
\newcommand{\pdual}[2]{\ifthenelse{\equal{#1}{}}{\mbox{$ \Big[ \,\cdot\; , \; \cdot \, \Big] $}}{
\mbox{$ \Big[ #1 \;  , \; #2 \Big]$}}}
\newcommand{\bdual}[2]{\ifthenelse{\equal{#1}{}}{\mbox{$ \Big\langle \,\cdot\; , \; \cdot \, \Big\rangle_* $}}{
\mbox{$ \Big\langle #1 \;  , \; #2 \Big\rangle_*$}}}

\newcommand{\CR}{\mathds{R}}

\newcommand{\CN}{\mathds{N}}

\newcommand{\sgn}{\mathrm{sgn}\,}

\newcommand{\PP}{\mathbf{P}}
\renewcommand{\P}{\mathbb{P}}

\newcommand{\A}{\mathcal{A}}

\newcommand{\bS}{{S}}

\newcommand{\cF}{\mathscr{F}}

\newcommand{\expect}{\mathbb{E}}
\newcommand{\cL}{\mathscr{L}}

\newcommand{\half}{\frac{1}{2}}

\newcommand{\inject}{\hookrightarrow}

\renewcommand{\P}{{\mathds{P}}}

\newcommand{\FF}{\mathds{F}}


\newcommand{\OO}{\mathcal{O}}

\newcommand{\g}{\gamma}

\let\mathcal \undefined
\def\mathcal{\mathscr}

\allowdisplaybreaks


\begin{document}

\title[Stochastic Reaction-Diffusion Systems]
{Stochastic Reaction-Diffusion Systems with H\"older Continuous Multiplicative Noise}

\author{Markus C. Kunze}

\begin{abstract}
We prove pathwise uniqueness and strong existence of solutions for stochastic reaction-diffusion systems with 
locally Lipschitz continuous reaction term of polynomial growth and H\"older continuous multiplicative noise. Under additional 
assumptions on the coefficients, we also prove positivity of the solutions.
\end{abstract}

\address{Graduiertenkolleg 1100, University of Ulm, 89069 Ulm, Germany}
\email{markus.kunze@uni-ulm.de}

\subjclass[2010]{60H15, 35R60}
\keywords{stochastic reaction diffusion system, pathwise uniqueness}
\thanks{The author was supported by the Deutsche Forschungsgesellschaft in the framework of the DFG research training group 1100.}
\maketitle

\section{Introduction}

Reaction-diffusion systems and stochastic perturbations of them play an important role in applications in chemistry, biology and physics \cite{murray}. 
In an abstract form, a stochastic reaction-diffusion system can be treated as a stochastic evolution equation
\begin{equation}\label{eq.scp}
dU(t)  =  \big[ AU(t) + F(U(t))\big] dt + G(U(t)) dW_H(t)
\end{equation}
on a Banach space $E$, which is a space of $\CR^r$-valued functions, defined on a domain $\OO$. 
Thus, \eqref{eq.scp} actually represents a system of $r$ coupled equations.
Here,  $A$ is a diagonal matrix of second order differential operators which describe the diffusion in the system and 
 the map $F$ accounts for the reaction in the system and is typically a composition operator with components of polynomial growth.  
 The system is driven by a cylindrical Wiener process $W_H$ in a suitable Hilbert space $H$.\smallskip

In the case of $r=1$, i.e.\ a single reaction-diffusion equation rather than a system, there  are many articles concerned with 
such equations, both in the case of additive noise (see \cite{dpz92, gp93, gp93a, dprzab2}) and in the case of (locally) Lipschitz 
continuous multiplicative noise (see \cite{bgp94, bp99, mz99, p95, KvN12}). Stochastic reaction-diffusion \emph{systems} 
with locally Lipschitz continuous multiplicative noise were considered in \cite{Cerrai}.\smallskip 

In the case where the noise term $G$ is no longer locally Lipschitz continuous the techniques from the above references can no longer be used. 
This is essentially due to the fact that for such equations a priori only stochastically weak solutions (or martingale solutions) can be constructed.
However, stochastic reaction-diffusion equations with merely H\"older continuous multiplicative noise appear naturally, e.g.\ in scaling limits 
of interacting particle systems, see e.g.\ \cite{mt95} were for $r=1$ noise terms with $G(u)(x) := |u(x)|^\half$ and $G(u)(x) = |u(x)(1-u(x))|^\half \one_{\{ 0\leq u(x) \leq 1\}}$ appear.

For such stochastic reaction-diffusion systems with H\"older continuous multiplicative noise only few results are available and, to the best of 
our knowledge, only the case $r=1$ is treated.
In \cite{bg99}, existence of solutions has been proved under an additional boundedness assumption on the coefficient $G$. However, a uniqueness 
result is missing in that article, except for the case of locally Lipschitz continuous $G$.  Pathwise uniqueness for the stochastic heat equation on $\CR^d$
(i.e.\ $r=1$, $A$ is the Laplace operator and $F\equiv 0$)  was proved in \cite{mps06} in the case where $W_H$ is replaced with a colored 
noise and $G$ is composition with a $\gamma$-H\"older continuous function, where the allowed values of $\gamma$ depend on the noise.
In \cite{mp11}, the one-dimensional version of the stochastic heat equation with white noise was considered, and it was proved that for that 
equation pathwise uniqueness holds for $\gamma > 3/4$. For $\gamma < 3/4$, it was recently proved in \cite{mmp12} that solutions are neither pathwise unique nor unique in law.\medskip

In this article, we are concerned with stochastic reaction-diffusion systems on a bounded Lipschitz domain $\OO \subset \CR^d$ of the form
\begin{equation}\label{eq.rds}
\left\{ \begin{array}{lll}
d u_l (t, x) & = & \big[ \A_l u_l(t,x) + f_l (x, u_1(t,x) , \ldots, u_r(t,x))\big] dt \\[0.2em]
&& \quad + \sum_{k=1}^\infty g_{l,k}(x, u_l(t,x)) d\beta_{l,k}(t), \\
&& \quad t > 0, x \in \OO, l=1, \ldots, r\\[0.2em]
\frac{\partial u_l}{\partial \nu_{\A_l}}(t,x) & = & 0 \quad t > 0, x \in \partial \OO, l=1, \ldots, r\\
u_l(0,x) & = & \xi_l (x) \quad x \in \overline{\OO}, l=1, \ldots, r\, . 
\end{array}
\right. 
\end{equation}
Here, $\A_l$ is a uniformly elliptic, second order differential operator in divergence form on $\OO$ and $\partial/\partial\nu_{A_l}$ is the associated conormal derivative. The functions $f_l : \OO \times \CR^k \to \CR$ are locally Lipschitz continuous, of polynomial growth and satisfy suitable dissipativity assumptions. Typical examples that we have in mind are odd-degree polynomials with negative leading coefficients, see 
Section \ref{s.fhn}. The functions
$g_{l,k} : \OO \times \CR \to \CR$ are locally $\half$-H\"older continuous and of linear growth such that the H\"older constants and the 
coefficients in the linear growth are square summable. Finally, the $\beta_{l,k}$ are independent, one-dimensional Brownian motions.
By \cite{mmp12}, pathwise uniqueness cannot be expected for equations driven by space-time white noise with $\half$-H\"older continuous coefficient. However,  our assumption allow us to consider noise terms of the form $G(U(t))RdW$, where $G$ is composition with a locally $\half$-H\"older continuous function of linear growth , $R$ is a Hilbert-Schmidt operator on $L^2(\OO)$ satisfying additional assumptions and 
$W_H$ is space-time white noise, see Example \ref{ex.r}.

We will make our assumptions precise in Section \ref{sect.prelim}\medskip

Under these assumptions, we prove pathwise uniqueness (Theorem \ref{t.uniqueness}) of solutions to equation \eqref{eq.rds} on the state space $E= C(\overline{\OO})^r$. Our proof follows the ideas of Yamada and Watanabe \cite{yw1}, who proved pathwise uniqueness for finite-dimensional 
SDE with $\half$-H\"older continuous coefficients.  The main difficulty to extend these results to the infinite-dimensional setting is of course 
to handle the differential operators $\A_l$. The strategy from \cite{mp11, mps06} to prove pathwise uniqueness, which is also an adaption of the Yamada-Watanabe ideas, cannot be used in our situation. Indeed, there the authors convolute solutions $u$ of the stochastic heat equation with a mollifier $\varphi_n$. In their variational framework, this yields the term 
$u\ast \Delta \varphi_n$ in the equation for the resulting process. It is then used that, as a consequence of its translation invariance, the Laplacian
commutes with convolutions, i..e.\ we have $u\ast \Delta \varphi_n = \Delta (u\ast \varphi_n)$. This is no longer true for differential operators 
with nonconstant coefficients, which we consider here. We overcome this difficulty by using the concept of an \emph{(analytically) weak solution}, see 
Definition \ref{def.weak}, which allows us to perform pointwise estimates.

We would like to point out that in the Yamada-Watanabe result, it is essential that the multiplicative noise is diagonal.
Therefore, in \eqref{eq.rds} we have also considered ``diagonal noise'' by letting the noise term
in the $l$-th equation only depend only on $u_l$, rather than the whole vector $u$. 
Thus, the equations in \eqref{eq.rds} are only coupled via the reaction terms $f_l$

Under additional assumptions on the functions $f_l$ and $g_{l,k}$, similar arguments as in the proof of Theorem \ref{t.uniqueness}
can be used to show that the solutions to \eqref{eq.rds} preserve positivity, i.e.\ if $\xi_1, \ldots, \xi_n \geq 0$ a.s.\ then we also have $u_l(t) \geq 0$ almost surely for all $t \geq 0$ and $l=1, \ldots, r$, see Theorem \ref{t.positive}.

Using pathwise uniqueness, we can adopt the strategy from \cite{Cerrai, KvN12} to prove existence of solutions to \eqref{eq.rds} 
in Theorem \ref{t.ex2}. In contrast to the existence result from \cite{bg99}, we can drop the uniform boundedness assumption on 
$G$ and allow $G$ of linear growth.\smallskip 

This article is organized as follows. In Section \ref{sect.prelim}, we fix our assumptions on equation \eqref{eq.rds} and rewrite it in the abstract 
form \eqref{eq.scp}. We also prove some preliminary results and recall some stochastic concepts that we will use. Section \ref{s.pathwise} contains the prove of pathwise uniqueness and Section \ref{s.positive} our result about positivity of solutions.
Existence of solutions will be proved in Section \ref{s.existence}. In the concluding Section \ref{s.fhn}, we apply our results 
to a stochastic reaction-diffusion system of Fitzhugh-Nagumo type.

\section{Preliminaries}\label{sect.prelim}
In this section, we fix our assumptions on equation \eqref{eq.rds}, rewrite the equation in the abstract form 
\eqref{eq.scp} and recall different notions of existence and uniqueness for equation \eqref{eq.scp} that will be used 
in what follows.

Throughout, all vector spaces are real. If $H$ is a Hilbert space, we write $(\cdot, \cdot)_H$ for the inner product in $H$.  When
$P_n(u)$ and $Q_n(u)$ are certain quantities depending on an index $n$ and a function $u$, we write $P_n(u) \lesssim Q_n(u)$ to indicate
that there exists a constant $c$, independent of $n$ and $u$ such that $P_n(u) \leq c Q_n (u)$ for all $n$ and $u$. We write 
$P_n (u) \eqsim Q_n (u)$ if both $P_n(u) \lesssim Q_n (u)$ and $Q_n (u) \lesssim P_n (u)$. 

\subsection{The differential operators}

We assume that the differential operators $\A_l$ are given by 
\[
\A_l = \sum_{i,j=1}^d D_i \big( a^l_{ij} D_j\big)  - c_l 
\]
where we make the following assumptions:
\begin{itemize}
\item[(A)]
The domain $\OO \subset \CR^d$ is bounded and has Lipschitz boundary. For $l=1, \cdots, r$, the matrix valued functions 
$a_l := (a^l_{ij}) : \overline{\OO} \to \CR^{d\times d}$ are symmetric and have measurable entries. Moreover, for certain $\eta, M > 0$ we have
\[
\eta |\xi|^2 \leq \sum_{i,j =1}^d a_{i,j}(x) \xi_i \xi_j \leq M |\xi|^2
\]
for almost all $x \in \OO$ and all $\xi \in \CR^d$. Finally, $c_l \in L^\infty (\OO)$.
\end{itemize}

To construct realizations of the differential operators which are generators of strongly continuous semigroups, we follow a variational approach.
We consider on $L^2(\OO)$ the symmetric form
\[
\mathfrak{a}^l [u,v] := \int_\OO \sum_{i,j=1}^da_{i,j}^l(x) D_i u(x) \overline{D_j u(x)} \, dx + \int_\OO u(x)v(x) c_l(x) \, dx
\]
endowed with domain $\mathsf{D}(\mathfrak{a}^l) := H^1(\OO)$. The associated operator $A_{l,2}$ is given by 
\[
\mathsf{D}(A_{l,2}) := \{ u \in H^1(\OO)\, : \, \, \exists\,  w \in L^2(\OO)\,\, \mbox{s.t.}\, 
(w,v)_{L^2} = \mathfrak{a}^l[u,v]\,\, \mbox{for all}\, v \in H^2(\OO) \}
\]
and $A_{l,2}u = -w$. Note that the boundary condition $\partial u /\partial \nu_{\A_l} := a_l\nabla u \cdot \nu = 0$ on $\partial \OO$ is incorporated in the domain of $A_{l,2}$. As is well-known, the operator $A_{l,2}$ generates a strongly continuous and analytic semigroup 
 $\bS_{l,2}$, see \cite{ouhabaz}. Changing the function $c_l$ by a constant if necessary, we may and shall assume that $\bS_{l,2}$ is uniformly exponentially 
stable, hence $A_{l,2}$ is invertible. Note that we can compensate the change in $c_l$ by appropriately changing $f_l$ in equation \eqref{eq.rds}, 
thus this assumption means no loss of generality for the stochastic reaction-diffusion equation \eqref{eq.rds}.

Since the form $\mathfrak{a}$ is sub-Markovian, see \cite[Chapter 4]{ouhabaz}, the semigroup $\bS_{l,2}$ restricts to a strongly continuous and analytic semigroup $\bS_{l,p}$ on 
$L^p (\OO)$ for every $p \in [2, \infty)$. The generator $A_{l,p}$ of the restricted semigroup $\bS_{l,p}$ is exactly the part of $A_{l,2}$ in $L^p(\OO)$. 

Finally, $\mathsf{D}(A_{l,2}) \subset C(\overline{\OO})$ and the semigroup $\bS_{l,2}$ restricts to a strongly continuous semigroup $\bS_{l,C}$ on 
$C(\overline{\OO})$, whose generator $A_{l,C}$ is the part of $A_{l,2}$ in $C(\overline{\OO})$. In the case where $c_l \equiv 0$, this follows from 
the results of \cite{ft96}, the case of general $c_l \in L^\infty(\OO)$ follows with a perturbation argument.

To prove existence of solutions to \eqref{eq.rds} in spaces of continuous functions, we will need some embedding results for the fractional domain spaces $\mathsf{D}((-A_{l,p})^\alpha)$. Often, such results are obtained from precise knowledge of $\mathsf{D}(A_{l,p})$ and 
interpolation. However, this strategy requires more regularity of the coefficients $a_l$ and the boundary of $\OO$. Here, we follow 
a different approach.

\begin{lem}\label{l.fdinject}
Assume (A) and let $d < 2p$. Then for $\alpha \in (\frac{d}{2p}, 1)$, the fractional domain space $\mathsf{D}((-A_{l,p})^\alpha)$ is continuously 
embedded into $C(\overline{\OO})$.
\end{lem}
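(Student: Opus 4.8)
The plan is to derive the embedding from an ultracontractivity estimate for the semigroup $\bS_{l,p}$ together with the integral representation of negative fractional powers via the semigroup, thereby sidestepping any description of $\mathsf{D}(A_{l,p})$ itself.

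First I would establish ultracontractivity. Since $\OO$ is a bounded Lipschitz domain it is a Sobolev extension domain, so the form domain $\mathsf{D}(\mathfrak{a}^l)=H^1(\OO)$ embeds into $L^{2d/(d-2)}(\OO)$ when $d\ge 3$ (and into every $L^q(\OO)$, $q<\infty$, when $d\le 2$). As $\mathfrak{a}^l$ is symmetric and sub-Markovian, the associated self-adjoint semigroup $\bS_{l,2}$ is therefore ultracontractive; equivalently, it admits a heat kernel obeying Gaussian upper bounds (these are available for divergence-form operators with conormal boundary conditions on Lipschitz domains), see \cite{ouhabaz}. In particular
\[
\|\bS_{l,2}(t)\|_{\mathcal{L}(L^1(\OO),L^\infty(\OO))}\lesssim t^{-d/2},\qquad t\in(0,1].
\]
Interpolating this with the sub-Markovian bound $\|\bS_{l,2}(t)\|_{\mathcal{L}(L^\infty(\OO))}\le1$ by the Riesz--Thorin theorem, and using that $\bS_{l,p}$ is the restriction of $\bS_{l,2}$ to $L^p(\OO)\subseteq L^2(\OO)$, I obtain
\[
\|\bS_{l,p}(t)\|_{\mathcal{L}(L^p(\OO),L^\infty(\OO))}\lesssim t^{-\frac{d}{2p}},\qquad t\in(0,1].
\]

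Next I would pass from $L^\infty(\OO)$ to $C(\overline\OO)$. For $t>0$ and $f\in L^p(\OO)$, analyticity of $\bS_{l,2}$ gives $\bS_{l,p}(t)f=\bS_{l,2}(t)f\in\mathsf{D}(A_{l,2})\subseteq C(\overline\OO)$, and since $\|g\|_{C(\overline\OO)}=\|g\|_{L^\infty(\OO)}$ for $g\in C(\overline\OO)$ the previous estimate becomes $\|\bS_{l,p}(t)f\|_{C(\overline\OO)}\lesssim t^{-d/(2p)}\|f\|_{L^p}$ for $t\in(0,1]$; writing $\bS_{l,p}(t)f=\bS_{l,2}(1)\bS_{l,p}(t-1)f$ for $t\ge1$ and invoking the uniform exponential stability of $\bS_{l,p}$ on $L^p(\OO)$ yields $\|\bS_{l,p}(t)f\|_{C(\overline\OO)}\lesssim e^{-\eps t}\|f\|_{L^p}$ there. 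Moreover $t\mapsto\bS_{l,p}(t)f$ is continuous from $(0,\infty)$ into $C(\overline\OO)$ (by analyticity and continuity of the inclusion $\mathsf{D}(A_{l,2})\hookrightarrow C(\overline\OO)$, which follows from the closed graph theorem). Since $\bS_{l,p}$ is bounded analytic with $0\in\rho(A_{l,p})$, for $\alpha\in(0,1)$ one has $(-A_{l,p})^{-\alpha}=\frac{1}{\Gamma(\alpha)}\int_0^\infty t^{\alpha-1}\bS_{l,p}(t)\,dt$ in $\mathcal{L}(L^p(\OO))$. Splitting at $t=1$,
\[
\int_0^\infty t^{\alpha-1}\|\bS_{l,p}(t)f\|_{C(\overline\OO)}\,dt\;\lesssim\;\Big(\int_0^1 t^{\alpha-1-\frac{d}{2p}}\,dt+\int_1^\infty t^{\alpha-1}e^{-\eps t}\,dt\Big)\|f\|_{L^p}<\infty,
\]
where finiteness of the first integral uses \emph{exactly} the hypothesis $\alpha>\frac{d}{2p}$. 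Hence $t\mapsto t^{\alpha-1}\bS_{l,p}(t)f$ is Bochner integrable with values in $C(\overline\OO)$, and its $C(\overline\OO)$-integral, pushed into $L^p(\OO)$ along the continuous inclusion $C(\overline\OO)\hookrightarrow L^p(\OO)$, coincides with $\Gamma(\alpha)(-A_{l,p})^{-\alpha}f$. Thus $(-A_{l,p})^{-\alpha}f\in C(\overline\OO)$ with $\|(-A_{l,p})^{-\alpha}f\|_{C(\overline\OO)}\lesssim\|f\|_{L^p}$; as $\mathsf{D}((-A_{l,p})^\alpha)=\rg\big((-A_{l,p})^{-\alpha}\big)$ carries a norm equivalent to $\|(-A_{l,p})^\alpha\,\cdot\,\|_{L^p(\OO)}$, this is precisely the asserted embedding.

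The only genuinely hard input is the first step — ultracontractivity (equivalently, Gaussian heat-kernel bounds) for a divergence-form operator with merely bounded measurable coefficients and conormal boundary conditions on a Lipschitz domain; everything afterwards is soft functional analysis. Two subtleties to watch there: that the Sobolev embedding really is available on $\OO$ (it is, Lipschitz domains being extension domains), and that in the borderline dimensions $d\le2$ one reaches the sharp exponent $t^{-d/2}$ rather than only $t^{-d/2-\eps}$ — no $\eps$-loss can be tolerated, since $\alpha>d/(2p)$ is the sharp threshold — which is handled via a logarithmic Sobolev inequality, or simply by citing the Gaussian bounds directly.
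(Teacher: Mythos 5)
Your proposal is correct and follows essentially the same route as the paper: ultracontractivity of $\bS_{l,p}$ (the paper simply cites \cite[Section 7.3]{arendtsurv} for the bound $t^{-d/(2p)}$, where you derive it from the Sobolev embedding and interpolation), combined with $\mathsf{D}(A_{l,2})\subset C(\overline{\OO})$, the integral representation of $(-A_{l,p})^{-\alpha}$, and the split of the Bochner integral at $t=1$ using $\alpha>\frac{d}{2p}$ and exponential stability. One minor remark: the sharpness worry for $d\le 2$ is harmless here, since $\alpha>\frac{d}{2p}$ is an open condition, so an ultracontractive exponent $D/(2p)$ with $d<D<2p\alpha$ would still suffice.
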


\begin{proof}
Let us first note that since $H^1(\OO) \inject L^\frac{2d}{d-2}(\OO)$ for $d \geq 3$ (and $H^1(\OO) \inject L^q(\OO)$ for arbitrary $q \in (1, \infty)$ for $d \leq 2$), it follows from \cite[Section 7.3]{arendtsurv} that  the semigroups $\bS_{l,p}$ are ultracontractive, i.e.\ for $p \geq 2$ the operator $S_{l,p}(t)$ maps $L^p(\OO)$ into $L^\infty (\OO)$ and we have 
\[
\| S_{l,p}(t)\|_{\cL (L^p(\OO), L^\infty (\OO))} \leq c t^{-\frac{d}{2p}} \quad \forall\, 0<t \leq 1
\]
for a certain constant $c= c(l,p) > 0$.

The fractional domain space $\mathsf{D}((-A_{l,p})^\alpha)$ is the range of the (bounded) operator $(-A_{l,p})^{-\alpha}$, defined by 
\begin{equation}\label{eq.afrac}
(-A_{l,p})^{-\alpha}f := \frac{1}{\Gamma (\alpha)} \int_0^\infty t^{\alpha -1} S_{l,p}(t)f \, dt,
\end{equation}
endowed with the norm $\| (-A_{l,p})f\|_{\mathsf{D}((-A_{l,p})^\alpha)} := \|f\|_p$.  We note that since $\bS_{l,p}$ is analytic, $S_{l,p}(t)f \in 
\mathsf{D}(A_{l,p})$ for all $t>0$ and $f \in L^p(\OO)$. As a consequence of \cite{ft96}, $\mathsf{D}(A_{l,p}) \subset C(\overline{\OO})$, hence 
the integrand takes values in $C(\overline{\OO})$.

Thus, to prove that $(-A_{l,p})^{-\alpha}f \in C(\overline{\OO})$, 
it suffices to show that the integral in \eqref{eq.afrac} exists as a Bochner integral in $C(\overline{\OO})$. Hence we have to show that 
$\|t^{\alpha -1} S_{l,p}(t)f\|_\infty$ is integrable on $(0,\infty)$. From ultracontractivity we obtain
\[
\|t^{\alpha -1} S_{l,p}(t)f\|_\infty \leq c t^{\alpha - 1 - \frac{d}{2p}}\|f\|_p \quad \forall\, 0<t\leq 1
\]
which is integrable on $(0,1)$ since $\alpha > \frac{d}{2p}$. Since $\bS_{l,p}$ is uniformly exponentially stable, we obtain 
for a certain constant $c'>0$ that
\begin{eqnarray*}
\|t^{\alpha -1} S_{l,p}(t)f\|_\infty & \leq&  \|S_{l,p}(1)\|_{\cL (L^p(\OO), L^\infty (\OO))} \|S_{l,p}(t-1)\|_{\cL (L^p(\OO), L^\infty (\OO))} \|f\|_p\\
&\leq &c' e^{-\omega t} \|f\|_p \quad \, \forall t> 1
\end{eqnarray*}
which is integrable on $(1, \infty)$. It thus follows that the integral in \eqref{eq.afrac} indeed exists as a Bochner integral in $C(\overline{\OO})$.
Moreover, we  have the estimate
\[
\| (-A)^{-\alpha}f\|_\infty \lesssim \|f\|_p = \| (-A_{l,p})f\|_{\mathsf{D}((-A_{l,p})^\alpha)} \, .
\]
This finishes the proof.
\end{proof}


Let us now introduce the Banach space $E$ and the operator $A$ that will be used in the abstract formulation \eqref{eq.scp} 
of equation \eqref{eq.scp}. We set $E := (C(\overline{\OO}))^r$, endowed with the norm 
$\| u\|_E := \sum_{l=1}^r \|u_j\|_\infty$. The operator $A$ will be the diagonal operator $\mathrm{diag}(A_{1,C}, \ldots, A_{r,C})$, 
with domain $\mathsf{D}(A_{1,C}) \times \cdots \times \mathsf{D}(A_{r,C})$. Then $A$
generates the strongly continuous semigroup $\bS_C := \mathrm{diag} (\bS_{1,C}, \cdots, \bS_{r,C})$. 

To simplify notation, we will drop the index $C$ from now on and merely write $A_1, \ldots, A_r$ and $S_1, \ldots, S_r$ in what follows.

%

\subsection{The reaction term}
Concerning the functions $f_l$, we make the following assumptions:
\begin{itemize}
\item[(F)] For $l=1, \ldots, r$, the function $f_l : \overline{\OO} \times \CR^k \to \CR$ is given as 
\[
f_l (x, s_1, \ldots, s_k ) := h_l (x, s_l) + k_l (x, s_1, \ldots, s_k)
\]
where the continuous function $k_l$ is assumed to be
locally Lipschitz continuous and of linear growth in the variables $s_1, \ldots, s_r$, uniformly with respect to the first, i.e.\ there are constants 
$c_1, c_2$ and $L_m$ for $m \in \CN$
such that
\[
|k_l (x, s_1, \ldots, s_r)| \leq c_1 + c_2\sum_{j=1}^r |s_j| \quad \forall\, x \in \overline{\OO}, s_j \in \CR
\]
and 
\[
|k_l (x, s_1, \ldots, s_r) - k_l (x, t_1, \cdots, t_r) | \leq L_m\sum_{j=1}^r |s_j- t_j| \quad \forall\, x \in \overline{\OO},  s_j, t_j \in [-m,m] \, .
\]
The continuous function
$h_l : \overline{\OO}\times \CR \to \CR$ is assumed to be locally Lipschitz continuous in the second variable, uniformly with respect to the first.
Moreover, we assume that
\begin{enumerate}
\item For a certain constant $a_l>0$ and an integer $N_l$, we have
\[
-a_l (1+|s|^{2N_l+1}\one_{\{s \geq 0\}}) \leq h_l (x, s) \leq a_l (1+|s|^{2N_l+1}\one_{\{s \leq 0\}})
\]
for all $x \in \overline{\OO}$ and $s \in \CR$.
\item For certain constants $a_{1,l}, a_{2,l} \in \CR$ and $b_{1,l},b_{2,l} >0$ and the integer $N_l$ from (F1) we have
\[
a_{1,l} -b_{1,l}s^{2N_l+1} \leq h_l(x,s) \leq a_{2,l} -b_{2,l}s^{2N_l+1}\, .
\]
\end{enumerate}
\end{itemize}

\begin{example}\label{ex.f}
Conditions (F1) and (F2) are satisfied for functions of the form
\[
h_l(x,s) := \sum_{j=1}^{2N_l+1} \omega_{l,j}(x) s^j\, 
\]
where $N_l$ is an integer and the coefficients $\omega_{l,j}$ belong to $C(\overline{\OO})$ and the highest order coefficients $\omega_{l,2N_{l+1}}$
satisfy $\omega_{l,2N_l+1} (x) \leq - \eps < 0$
for some $\eps>0$ and all $x \in \overline{\OO}$. The proof of this fact can be found in Examples 4.2 and 4.5 in \cite{KvN12}.
\end{example}

We now define the operators $H_l : C(\overline{\OO}) \to C(\overline{\OO})$ by $(H_l(u))(x) := h_l (x, u(x))$ and the operators 
$K_l : E \to C(\overline{\OO})$ by $(K_l (u_1, \ldots, u_r))(x) := k_l (x, u_1(x), \ldots, u_r(x))$. Finally, the map $F : E \to E$ which appears
in \eqref{eq.scp} will be given by $F(u) = (F_1(u), \cdots, F_r(u))$, where $F_l (u) = H_l (u_l) + K_l (u)$. Obviously, the maps $F, F_l, H_l, K_l$ 
are locally Lipschitz continuous. 

Conditions (F1) and (F2) imply that the maps $H_l$ satisfy additional dissipativity  conditions which play a crucial role in proving 
existence of solutions of \eqref{eq.scp}. These dissipativity conditions were first used in \cite{dpz92}. 

Recall, that in a Banach space $(B, \|\cdot \|)$, the \emph{subdifferential of the norm at $x$} is given by 
\[
\partial \|x\| := \{ \varphi \in B^* \, : \, \| \varphi \| = 1 \quad \mbox{and} \quad \langle x, \varphi \rangle = \|x\| \} \, .
\]
As a consequence of the Hahn-Banach theorem, $\partial \|x\| \neq \emptyset$ for all $x \in B$. We have

\begin{lem}\label{l.fdiss}
Let $H_l$ be defined as above, $u,v \in C(\overline{\OO})$ and $\varphi \in \partial \|u\|_\infty$. Then we have
\begin{enumerate}
\item $\langle H_l(u+v), \varphi \rangle \leq a'(1+\|v\|_\infty )^{2N_l+1}$ and
\item $\langle H_l(u+v) - H_l(v), \varphi \rangle \leq a''(1+\|v\|_\infty)^{2N_l+1} - b'' \|u\|^{2N_l+1}_\infty$.
\end{enumerate}
for suitable constants $a', a'', b'' >0$ depending only on the constants $a_l$ from (F1) and $a_{1,l}, a_{2,l}, b_{1,l}, b_{2,l}$ from (F2).
\end{lem}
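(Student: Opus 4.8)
The plan is to reduce both assertions to a pointwise inequality for the real-valued function $h_l(x,\cdot)$ and then exploit the explicit description of the subdifferential of the sup-norm on $C(\overline{\OO})$. Recall that for $u\in C(\overline{\OO})$ with $u\neq 0$, every $\varphi\in\partial\|u\|_\infty$ is a (signed) measure supported on the set $M(u):=\{x\in\overline\OO: |u(x)|=\|u\|_\infty\}$, of total variation $1$, with $\langle u,\varphi\rangle=\|u\|_\infty$; concretely one may take $\varphi=\operatorname{sgn}(u(x_0))\delta_{x_0}$ for a point $x_0$ where $|u|$ attains its maximum, and in general $\varphi$ is a convex combination/average of such Dirac functionals. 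Hence for any $w\in C(\overline\OO)$ we have $\langle w,\varphi\rangle=\int_{\overline\OO}w\,d\varphi$, and on $\operatorname{supp}\varphi$ we know $u(x)=\|u\|_\infty\operatorname{sgn}$ in a consistent sense. The key point is: at a point $x_0\in M(u)$ with, say, $u(x_0)=\|u\|_\infty\ge 0$, the relevant contribution to $\langle H_l(u+v),\varphi\rangle$ is $h_l(x_0,u(x_0)+v(x_0))=h_l(x_0,\|u\|_\infty+v(x_0))$, and we bound this using (F1); the case $u(x_0)=-\|u\|_\infty$ is symmetric using the other half of (F1).

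For (1): fix $\varphi\in\partial\|u\|_\infty$ and write $\langle H_l(u+v),\varphi\rangle=\int_{\overline\OO}h_l(x,u(x)+v(x))\,d\varphi(x)$. Split $\overline\OO$ into $M^+:=\{x: u(x)=\|u\|_\infty\}$ and $M^-:=\{x: u(x)=-\|u\|_\infty\}$ (up to a $\varphi$-null set, $\operatorname{supp}\varphi\subset M^+\cup M^-$). On $M^+$, if $\|u\|_\infty+v(x)\ge 0$ then by the upper bound in (F1), $h_l(x,\|u\|_\infty+v(x))\le a_l(1+0)=a_l$ since the indicator $\one_{\{s\le 0\}}$ vanishes; if $\|u\|_\infty+v(x)<0$ then $|\,\|u\|_\infty+v(x)\,|\le\|v\|_\infty$, so $h_l(x,\|u\|_\infty+v(x))\le a_l(1+\|v\|_\infty^{2N_l+1})$. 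Either way the integrand is $\le a_l(1+\|v\|_\infty^{2N_l+1})$ on $M^+$; an entirely symmetric argument using the lower bound in (F1) shows that on $M^-$ we likewise have $h_l(x,-\|u\|_\infty+v(x))\ge -a_l(1+\|v\|_\infty^{2N_l+1})$ — but here we must be a little careful, because on $M^-$ the functional $\varphi$ has negative sign, so a lower bound on $h_l$ yields the desired upper bound on $\langle\cdot,\varphi\rangle$. Integrating against $|\varphi|\le 1$ and using $(1+t^{2N_l+1})\lesssim(1+t)^{2N_l+1}$ for $t\ge 0$ gives (1) with $a'$ a multiple of $a_l$.

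For (2): the same localization applies to $H_l(u+v)-H_l(v)$, whose value at $x$ is $h_l(x,u(x)+v(x))-h_l(x,v(x))$. On $M^+$ we use the \emph{upper} bound in (F2) for $h_l(x,\|u\|_\infty+v(x))$ and the \emph{lower} bound in (F2) for $h_l(x,v(x))$, obtaining
\[
h_l(x,\|u\|_\infty+v(x))-h_l(x,v(x))\le (a_{2,l}-a_{1,l})-b_{2,l}(\|u\|_\infty+v(x))^{2N_l+1}+b_{1,l}v(x)^{2N_l+1}.
\]
Now one expands: since $t\mapsto t^{2N_l+1}$ is odd and increasing, and with $a:=\|u\|_\infty\ge 0$, $b:=v(x)$, $|b|\le\|v\|_\infty$, an elementary estimate gives $-b_{2,l}(a+b)^{2N_l+1}+b_{1,l}b^{2N_l+1}\le C(1+\|v\|_\infty)^{2N_l+1}-c\,a^{2N_l+1}$ for constants $C,c>0$ depending only on $b_{1,l},b_{2,l},N_l$; this is the one genuinely computational step, and it is the main obstacle — one has to show the leading negative $-b_{2,l}a^{2N_l+1}$ term dominates all mixed terms $a^jb^{2N_l+1-j}$ with $j<2N_l+1$, absorbing them into $\varepsilon a^{2N_l+1}+C_\varepsilon\|v\|_\infty^{2N_l+1}$ via Young's inequality. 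On $M^-$ the roles reverse (use the lower bound of (F2) for $h_l(x,-\|u\|_\infty+v(x))$, the upper bound for $h_l(x,v(x))$, and remember $\varphi$ is negative there), and the symmetry of the odd power again produces the same bound. Integrating against $|\varphi|$ of total mass $1$ yields (2). Finally, one checks the constants $a',a'',b''$ depend only on the quantities listed; I would remark that the case $u=0$ is trivial since then $H_l(u+v)=H_l(v)$ and $\|u\|_\infty=0$.
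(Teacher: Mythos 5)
Your argument is correct, and it is in substance the standard proof: the paper itself does not prove Lemma \ref{l.fdiss} but merely cites \cite[Section 4.3]{dpz92} and Examples 4.2 and 4.5 of \cite{KvN12}, and the argument given there is exactly the one you reconstruct — represent $\varphi\in\partial\|u\|_\infty$ as a signed measure of total variation $1$ concentrated on the extremal set of $u$, evaluate the composition operator pointwise there, and finish with elementary polynomial inequalities. Two small points you should tighten rather than assert. First, the structure of $\varphi$: it is not literally a convex combination of Diracs in general; what you need (and what is true) is that in the Jordan decomposition $\varphi=\mu^+-\mu^-$ the measure $\mu^+$ is concentrated on $\{u=\|u\|_\infty\}$ and $\mu^-$ on $\{u=-\|u\|_\infty\}$, which follows from $\|u\|_\infty=\int u\,d\varphi\le\int|u|\,d|\varphi|\le\|u\|_\infty$ forcing equality and $d\varphi=\operatorname{sgn}(u)\,d|\varphi|$ a.e.; this is exactly what your $M^+/M^-$ splitting uses, and it also matters that $|\varphi|(\overline{\OO})=1$ \emph{exactly}, so that the constant pointwise bound (which may be negative in (2)) integrates to itself. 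Second, the ``computational step'' in (2) does close: either your Young-absorption of the mixed binomial terms, or directly the inequality $(a+b)^{2N_l+1}\ge 2^{-(2N_l+1)}a^{2N_l+1}-2|b|^{2N_l+1}$ for $a\ge0$, gives $-b_{2,l}(a+b)^{2N_l+1}+b_{1,l}b^{2N_l+1}\le C(1+\|v\|_\infty)^{2N_l+1}-c\,a^{2N_l+1}$ with constants depending only on $b_{1,l},b_{2,l},N_l$, and the $M^-$ case follows by oddness as you say. Finally, for completeness note that the case $u=0$ is also trivial for (1), since then $|\langle H_l(v),\varphi\rangle|\le\|H_l(v)\|_\infty\lesssim(1+\|v\|_\infty)^{2N_l+1}$ by (F1)/(F2).
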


\begin{proof}
See \cite[Section 4.3]{dpz92}, cf.\ also Examples 4.2 and 4.5 in \cite{KvN12}
\end{proof}

\subsection{The noise term} Let us now turn to the stochastic term in equation \eqref{eq.rds}. We shall assume:
\begin{itemize}
\item[(G)] For $l=1, \cdots, r$ and $k \in \CN$ the continuous functions $g_{l,k} : \overline{\OO} \times \CR \to \CR$
have the following properties
\begin{enumerate}
\item There exist $\alpha_l = (\alpha^l_k)_{k\in \CN}, \beta_l = (\beta^l_k)_{k\in \CN} \in \ell^2$ such that 
$|g_{l,k}(x,s)| \leq \alpha^l_k + \beta^l_k|s|$
for all $x \in \overline{\OO}$ and $s \in \CR$.
\item For every $m \in \CN$ there exist continuous functions $\sigma_{l,k, m}: [0, \infty) \to [0, \infty)$ such that
\begin{enumerate}
\item We have $|g_{l,k}(x, s_1) - g_{l,k}(x,s_2)| \leq \sigma_{l,k, m}(|s_1-s_2|)$ for all $x \in \overline{\OO}$ and $s_1, s_2 \in [-m, m]$.
\item For $l=1, \cdots, r$ and $m \in \CN$, the function $\rho_{l, m} : (0, \infty) \to (0,\infty)$, defined as $\rho_{l,m} (s) := \sum_{k=1}^\infty \sigma_{l,k, m}(s)^2$
is increasing in $s$ and satisfies
\[
\int_{0+} \frac{1}{\rho_{l,m}(s)} ds = \infty\, .
\]
\end{enumerate}
\end{enumerate}
\end{itemize}

\begin{example}\label{ex.r}
Let us give an example of functions $g_{l,k}$ which satisfy (G). For simplicity, we assume that $r=1$ and drop the subscript $l$.

Let $R : L^2(\OO) \to L^2(\OO)$ be a Hilbert-Schmidt operator which is diagonalized by an orthonormal basis $(e_k)_{k \in \CN}$, i.e.\
 for some sequence $\lambda = (\lambda_k)_{k \in \CN}
 \in \ell^2$, we have 
\[
Ru = \sum_{k=1}^\infty \lambda_k (u,e_k)_{L^2}e_k \,.
\]
We additionally assume that $e_k \in C(\overline{\OO})$ and that the sequence $\|\lambda_k e_k\|_\infty$ is square summable.
Moreover, let $g: \CR \to \CR$ be of linear growth and locally H\"older continuous of order $\half$, i.e.\ 
$|g(s)| \leq a +b|s|$ for certain $a,b \in \CR$ and there exist constants $c_m>0$ such that
$|g(s_1) - g(s_2)| \leq c_m|s_1-s_2|^\half$ for all $s_1, s_2 \in [-m, m]$.

Then (G) is satisfied if we set $g_k(x,s) := g(s)\lambda_k e_k(x)$. Indeed, (1) holds true with $\alpha_k = a\|\lambda_ke_k\|_\infty$ and 
$\beta_k = b\|\lambda_ke_k\|_\infty$.  For (2), we choose $\sigma_{k,m} (s) = c_m\|\lambda_ke_k\|_\infty s^\half$. Then clearly (a) is satisfied. In (b), we find 
$\rho_{m} (s) = s \sum_{k=1}^\infty \|\lambda_ke_k\|_\infty^2$, hence $\rho_{m}^{-1}$ is not integrable near 0.

In this situation, the noise term in equation \eqref{eq.rds} can actually be rewritten as 
$g(u(t,x)) RdW(t,x)$, where $W$ is a space-time white noise, i.e.\ an $L^2(\OO)$-cylindrical Brownian motion.
\end{example}

\begin{rem}
If the orthonormal basis $(e_k)$ in example \ref{ex.r} is uniformly bounded in $C(\overline{\OO})$, i.e.\ $\sup_k\|e_k\|_\infty < \infty$, 
then our assumptions reduce to $(\lambda_k) \in \ell^2$. This is for example the case if we consider the standard orthonormal basis of 
$L^2(0,2\pi)$.
If we want to consider the orthonormal basis $(e_k)$ which diagonalizes 
the operator $A_2$ on $L^2(\OO)$, then this basis consists of functions in $C(\overline{\OO})$, as $\mathsf{D}(A_s) \subset 
C(\overline{\OO})$, however the functions are typically not uniformly bounded in $C(\overline{\OO})$. Typically, one can obtain 
a bound $\|e_k\|_\infty \lesssim k^s$ for a certain $s> 0$ from ultracontractivity. In this case, one has to color the noise more by 
requiring that $(k^s\lambda_k) \in \ell^2$.
\end{rem}

Let us return to our general setting and explain how the stochastic term is modeled in the abstract equation \eqref{eq.scp}. We define $G_l : C(\overline{\OO}) \to 
\cL (\ell^2, C(\overline{\OO}))$ by 
\[
\big[G_l(u)h \big](x) := \sum_{k=1}^\infty h_k \big[G_{l,k}(u)\big](x)\, ,
\]
where $G_{l,k} : C(\overline{\OO}) \to C(\overline{\OO})$ is defined by $\big[G_{l,k}(u)\big](x) := g_{l,k}(x, u(x))$.
Note that the above series converges in $C(\overline{\OO})$, since $\|h_k g_{l,k}(\cdot, u(\cdot))\|_\infty \leq h_k(\alpha_{l,k} + \beta_{l,k}\|u\|_\infty)$
is summable.

For the purpose of stochastic integration, we will view $G_l$ as a function taking values in $\cL (\ell^2, L^p(\overline{\OO}))$ 
by embedding $C(\overline{\OO})$ into $L^p(\overline{\OO})$. It turns out that $G_l$ even takes values in 
$\gamma (\ell^2, L^p(\OO))$, the space of \emph{$\gamma$-radonifying operators from $\ell^2$ to $L^p(\OO)$}. 

Let us briefly recall the definition of $\gamma$-radonifying operators. Given a Hilbert space $\mathscr{H}$ and a Banach space 
$B$, every finite rank operators $T: \mathscr{H}\to B$ can be represented in the form
\[
T = \sum_{j=1}^N h_j\otimes x_j
\]
for some integer $N\geq 1$, where $(h_j)_{j=1}^N$ is an orthonormal system in $\mathscr{H}$ and $x_1, \ldots, x_N \in B$. Here, 
$h\otimes x$ is the rank one operator mapping $g \in \mathscr{H}$ to $(g,h)_\mathscr{H}x$. With $T$ represented in this form, we define 
\[
\|T\|_{\gamma (\mathscr{H}, B)} := \Big( \expect \Big\| \sum_{j=1}^N \gamma_j x_j\Big\|^2 \Big)^\half
\]
where $(\gamma_{j})_{j=1}^N$ is a sequence of independent standard normal random variables. This norm is independent of the representation 
of $T$ in the above form. The space $\gamma (\mathscr{H}, B)$ is the completion of the finite rank operators 
with respect to the norm $\|\cdot\|_{\gamma (\mathscr{H}, B)}$. The identity operator on $\mathscr{H}\otimes B$ extends to a continuous 
embedding $\gamma (\mathscr{H}, B) \inject \cL (\mathscr{H}, B)$. We may thus view $\gamma (\mathscr{H}, B)$ as a linear subspace of 
$\cL (\mathscr{H}, B)$. The operators belonging to $\gamma (\mathscr{H}, B)$ are called the $\gamma$-radonifying operators 
from $\mathscr{H}$ to $B$. For more information about $\gamma$-radonifying operators, we refer the reader to \cite{vNsurvey}.

To see that the map $G_l$ from above takes values in $\gamma (\ell^2, L^p(\OO))$, let
$(e_k)$ denote the canonical orthonormal basis of $\ell^2$. Then the series 
\[
\sum_{k=1}^\infty | G_l(u)e_k|^2 = \sum_{k=1}^\infty |g_{l,k}(\cdot, u(\cdot))|^2
\]
converges in $L^p(\OO)$. Thus, by \cite[Lemma 2.1]{vNVW08}, $G(u) \in \gamma (\ell^2, L^p(\OO))$ and 
\[
\| G(u)\|_{\gamma (\ell^2, L^p(\OO))} \simeq \Big\| \Big( \sum_{k=1}^\infty |g_{l,k}(\cdot, u(\cdot))|^2\Big)^\half \Big\|_{L^p(\OO)}
\leq |\OO| (\|\alpha_l\|_{\ell^2} + \|\beta_l\|_{\ell^2} \|u\|_\infty)\,,
\]
proving that $G_l$ is of linear growth. Let us next show that $G_l : C(\overline{\OO}) \to \gamma (\ell^2, L^p(\OO))$ is continuous.
To that end, let $u_n \to u$ in $C(\overline{\OO})$. Employing \cite[Lemma 2.1]{vNVW08} a second time, it follows that 
\[
\|G(u_n) - G(u)\|_{\gamma(\ell^2, L^p(\OO))} \simeq 
\Big\| \Big( \sum_{k=1}^\infty |g_{l,k}(\cdot, u_n(\cdot))- g_{l,k}(\cdot, u(\cdot))|^2\Big)^\half \Big\|_{L^p(\OO)} \, ,
\]
which converges to $0$ as $n \to \infty$ by dominated convergence.

Let us summarize the properties of $G_l$

\begin{lem}\label{l.propg}
Assume (G). Then the maps $G_l : C(\overline{\OO}) \to \gamma (\ell^2, L^p(\OO))$ are well-defined, of linear growth and continuous.
\end{lem}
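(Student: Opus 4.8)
The plan is to verify the three claimed properties of $G_l$ in turn, essentially collecting the estimates that have already been sketched in the paragraphs preceding the statement. First I would check that $G_l$ is well-defined as a map into $\gamma(\ell^2, L^p(\OO))$. Fix $u \in C(\overline{\OO})$. By (G1), the pointwise series $\sum_{k=1}^\infty |g_{l,k}(x,u(x))|^2$ is dominated by $\sum_{k=1}^\infty (\alpha_k^l + \beta_k^l |u(x)|)^2 \leq 2(\|\alpha_l\|_{\ell^2}^2 + \|\beta_l\|_{\ell^2}^2\|u\|_\infty^2)$, so it converges in $L^\infty(\OO)$ and hence in $L^p(\OO)$ for any $p \in [2,\infty)$; in particular the square function $(\sum_k |g_{l,k}(\cdot,u(\cdot))|^2)^{1/2}$ lies in $L^p(\OO)$. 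By \cite[Lemma 2.1]{vNVW08} this is exactly the condition needed to conclude $G_l(u) \in \gamma(\ell^2, L^p(\OO))$, with the norm equivalence
\[
\| G_l(u)\|_{\gamma(\ell^2, L^p(\OO))} \eqsim \Big\| \Big(\sum_{k=1}^\infty |g_{l,k}(\cdot, u(\cdot))|^2\Big)^{1/2}\Big\|_{L^p(\OO)}.
\]

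Next, for linear growth I would bound the right-hand side above: pointwise $(\sum_k |g_{l,k}(x,u(x))|^2)^{1/2} \leq \|\alpha_l\|_{\ell^2} + \|\beta_l\|_{\ell^2}\|u\|_\infty$ by (G1) and Minkowski's inequality in $\ell^2$, and integrating the $p$-th power over the bounded domain $\OO$ gives $\|G_l(u)\|_{\gamma(\ell^2,L^p(\OO))} \lesssim |\OO|^{1/p}(\|\alpha_l\|_{\ell^2} + \|\beta_l\|_{\ell^2}\|u\|_\infty)$, which is the asserted linear growth (the precise constant is irrelevant). For continuity, take $u_n \to u$ in $C(\overline{\OO})$; applying the same norm equivalence to the difference gives
\[
\|G_l(u_n) - G_l(u)\|_{\gamma(\ell^2,L^p(\OO))} \eqsim \Big\|\Big(\sum_{k=1}^\infty |g_{l,k}(\cdot,u_n(\cdot)) - g_{l,k}(\cdot,u(\cdot))|^2\Big)^{1/2}\Big\|_{L^p(\OO)}.
\]
Since $u_n \to u$ uniformly, all values $u_n(x), u(x)$ eventually lie in a fixed interval $[-m,m]$; by (G2a) the $k$-th integrand is at most $\sigma_{l,k,m}(|u_n(x)-u(x)|)^2$, and the sum of these is $\rho_{l,m}(\|u_n - u\|_\infty)$, which one would like to send to $0$. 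Here I would use continuity of each $\sigma_{l,k,m}$ (with $\sigma_{l,k,m}(0)=0$, forced by (G2a) taking $s_1=s_2$) together with a dominated-convergence argument: pointwise each term $|g_{l,k}(x,u_n(x))-g_{l,k}(x,u(x))|^2 \to 0$ by continuity of $g_{l,k}$, and the whole sum is dominated by the constant $2(\|\alpha_l\|_{\ell^2}^2 + \|\beta_l\|_{\ell^2}^2 \sup_n\|u_n\|_\infty^2)$ via (G1), so by dominated convergence (in the joint sum-over-$k$ and integral-over-$x$ sense, or equivalently dominated convergence in $L^p(\OO)$) the displayed quantity tends to $0$.

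I do not expect any serious obstacle here; the lemma is essentially bookkeeping once \cite[Lemma 2.1]{vNVW08} is invoked. The only point requiring a little care is the continuity argument: one must make sure the domination is uniform in $n$ (which follows because a convergent sequence in $C(\overline{\OO})$ is bounded) and that the convergence to zero is justified — the cleanest route is dominated convergence applied to the sequence of $L^p(\OO)$-functions $x \mapsto (\sum_k |g_{l,k}(x,u_n(x)) - g_{l,k}(x,u(x))|^2)^{1/2}$, whose pointwise limit is $0$ and which are all bounded by a fixed constant on the finite-measure space $\OO$. With these observations, well-definedness, linear growth and continuity all follow, completing the proof.
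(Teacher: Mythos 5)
Your proposal is correct and follows essentially the same route as the paper: convergence of the square function $\bigl(\sum_k |g_{l,k}(\cdot,u(\cdot))|^2\bigr)^{1/2}$ in $L^p(\OO)$ together with \cite[Lemma 2.1]{vNVW08} gives well-definedness and the linear growth bound, and continuity is obtained by applying the same norm equivalence to $G_l(u_n)-G_l(u)$ and invoking dominated convergence (your added care about uniform domination via boundedness of the sequence $(u_n)$ in $C(\overline{\OO})$ is exactly the justification the paper leaves implicit). No gaps; your remark that the cleanest argument bypasses $\rho_{l,m}$ and dominates directly via (G1) matches the paper's reasoning.
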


We now proceed to model the stochastic term in equation \eqref{eq.scp}. The driving process $W_H$ is an $H$-cylindrical Wiener process
for a suitable Hilbert space $H$, defined on a filtered probability space $(\Omega, \Sigma, \FF, \P)$, i.e.\ a bounded linear operator 
from $L^2([0,\infty); H)$ to $L^2(\Omega)$ with the following properties:
\begin{enumerate}
\item for  $f \in L^2([0,\infty); H)$, the random variable $W_H(f)$ is centered Gaussian.
\item for  $t>0$ and $f \in L^2([0,\infty); H)$ with support in $[0,t]$, $W_H(f)$ is $\cF_t$-measurable.
\item for $t>0$ and $f \in L^2([0,\infty); H)$ with support in $[t,\infty)$, $W_H(f)$ is independent of $\cF_t$.
\item for $f_1, f_2 \in L^2([0,\infty); H)$ we have $\expect (W_H(f_1)W_H(f_2)) = (f_1, f_2)_{L^2([0,\infty); H)}$.
\end{enumerate}
It is easy to see that for $h\in H$, the process $(W_H(t)h)_{t \geq 0}$, defined by $W_H(t)h:= W_H(\one_{(0,t]}\otimes h)$ is a real valued
$\FF$-Brownian motion (which is standard if $\|h\|_H=1$). Moreover, two such Brownian motions $(W_H(t)h_1)_{t\geq 0}$ and $(W_H(t)h_2)_{t\geq 0}$
are independent if and only $h_1$ and $h_2$ are orthogonal in $H$. We refer to \cite{vNsurvey} for a further discussion.\smallskip

To model the reaction-diffusion system \eqref{eq.rds} in abstract form, we choose $H := \big[\ell^2\big]^r$. Denoting the 
canonical orthonormal basis of $\ell^2$ by $(e_k)$, we put $e_{l,k}:= (0, \cdots, 0, e_k, 0, \ldots, 0)$, where the $e_k$ is at position $l$.
Then $(e_{l,k})$ is an orthonormal basis of $H$. Let $\beta_{l,k}$, $l=1, \ldots, r$ and $k \in \CN$ be a family of independent real-valued 
Brownian motions defined on a common probability space $(\Omega, \Sigma, \P)$. Then $W_H : L^2(\CR_+, H) \to L^2(\Omega)$, defined
by 
\[
W_H (f) := \sum_{l=1}^r\sum_{k=1}^\infty \int_0^\infty (f(t), e_{l,k})_H d\beta_{l,k}(t)
\]
is an $H$-cylindrical Wiener process, see \cite{vNsurvey}. We denote by $P_l : H \to H$ the projection onto the $l$-th component and 
define $G: E \to \cL(H, \tilde{E})$ by 
\[G(u)h := (G_1(u_1)P_1h, \ldots, G_r(u_r)P_rh) .\]
Later on, we will also write $H_l := P_lH$ and define $W_{H_l}$ by $W_{H_l}(f) := W_H(P_lf)$ for $f \in L^2([0,\infty); H)$. Then 
$W_{H_l}$ is an $H_l$-cylindrical Wiener process.

\subsection{Solution concepts}
Having rewritten equation \eqref{eq.rds} in the abstract form \eqref{eq.scp}, we now define what we mean by a solution of 
equation \eqref{eq.scp}, thus by a solution of equation \eqref{eq.rds}. Since the map $G$ is not locally Lipschitz continuous, we initially
work with  stochastically weak solutions, i.e.\ the probability space is part of the solution. We first consider a solution concept which is also 
analytically weak.

\begin{defn}\label{def.weak}
A \emph{weak solution} of equation \eqref{eq.scp} is a tupel $((\Omega, \Sigma, \P), \FF, W_H, u)$, where 
$(\Omega, \Sigma, \P)$ is a probability space endowed with a filtration $\FF$ which satisfies the usual conditions, $W_H$ is 
an $H$-cylindrical Wiener process with respect to $\FF$ and $u = (u(t))_{t \geq}$ is a continuous, $\FF$-progressive, $E$-valued 
process such that for all $x^* \in D(A^*)$ and $t \geq 0$ we have
\begin{equation}\label{eq.weaksol}
\dual{u(t)}{x^*} = \dual{u(0)}{x^*} + \int_0^t\dual{u(s)}{A^*x^*}+ \dual{F(u(s))}{x^*}\, ds + \int_0^t G(u(s))^*x^*\, dW_H(s)\, .
\end{equation}
$\P$-almost surely. 

If an initial datum $\xi \in E$ is specified, we say that $((\Omega, \Sigma, \P), \FF, W_H, U)$ is a weak solution 
to the initial value problem corresponding to \eqref{eq.scp}, if it is a weak solution of \eqref{eq.scp} and additionally $\P(U(0) =\xi ) = 1$.
\end{defn}

Note that the stochastic integral in \eqref{eq.weaksol} is an integral of an $H\simeq H^*$-valued stochastic process with respect to a 
cylindrical Wiener process. Such an integral is defined as follows. If $(h_k)_{k \in \CN}$ is an orthonormal Basis of $H$, put $w_k := W_H(s)h_k$
which is a real valued Brownian motion. We then define
\[
\int_0^t G(u(s))^*x^*\, dW_H(s) := \sum_{k=1}^\infty \int_0^t (G(u(s))^*x^*, h_k)_H \, dw_k(s)\, .
\]
In the special situation of equation \eqref{eq.rds}, 
$\mathsf{D}(A^*)$ can be identified with $\mathsf{D}(A_1^*) \times \cdots \times \mathsf{D}(A_r^*)$ 
which is a subset of $\mathscr{M}(\overline{\OO})^r$, the $r$-fold product of the bounded Borel measures on $\overline{\OO}$. 
Using the standard basis $(e_{l,k})$ of $H = \big[ \ell^2\big]^r$ and writing $\beta_{l,k} := W_He_{kl}$ as in the previous section,
we have for $x^* = (\mu_1, \ldots, \mu_r) \in \mathsf{D}(A^*)$
\[
\int_0^t G(u(s))^*x^*\, dW_H (s) = \sum_{l=1}^r\sum_{k=1}^\infty \int_{\overline{\OO}} g_{l,k}(x, u_l(s,x)) \, d\mu_l (x) \, d\beta_{l,k}(s)
\]
In particular, choosing $\mu_l := R(\lambda, A_l)^*\delta_x$, where $\delta_x$ is Dirac measure in $x$ and $\mu_j = 0$ for $j \neq l$, 
equation \eqref{eq.weaksol} reduces to 
\begin{eqnarray*}
\big[R(\lambda, A_l) u_l (t)\big](x) &= & u_l (0, x) + \int_0^t \big[A_lR(\lambda, A_l)u_l(s)\big](x) + \big[F_l (u(s))\big](x) \, ds\\
&&\quad  + \sum_{l=1}^r  \big[R(\lambda, A_l) G_{l}(u(s))\big](x)\, dW_{H_l}(s).
\end{eqnarray*}
This will be used in the following section.\medskip 

In order to prove \emph{existence} of solutions, the concept of weak solutions is not suitable. Instead, we will use the concept of a \emph{mild}
solution. To define the concept of a mild solution, we have to use a Banach space valued stochastic integral. We will use the theory of 
stochastic integration in UMD Banach space \cite{vNVW07}. We note that our state space $E$ is \emph{not} a UMD Banach space.
However,  the fractional domain space $\mathsf{D}((-A_{l,p})^\alpha)$, being isomorphic to $L^p(\OO)$, is a UMD Banach space. 
We may thus perform stochastic integration in fractional domain spaces and then use Lemma \ref{l.fdinject} to get back into our state space.
More precisely, we have

\begin{lem}\label{l.stochint}
Assume (A) and (G). Moreover, let $(\Omega, \Sigma, \FF, \P)$ be a stochastic basis on which an $H$-cylindrical Wiener process is defined.
Then, if $X= (X_1, \ldots, X_r)$ is a continuous, $\FF$-progressive $E$-valued process, then for every $t> 0$ the process 
$t \mapsto S(t-s)G(X(s))$ is stochastically integrable and the stochastic integral 
\[
\int_0^t S(t-s)G(X(s))\, dW_H(s)
\]
defines an $\cF_t$-measurable, $E$-valued random variable.
\end{lem}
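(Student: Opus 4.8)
The plan is to carry out the stochastic integration in a fractional domain space, which is UMD, and then pull the resulting random variable back into $C(\overline{\OO})$ by means of Lemma \ref{l.fdinject}. Since both $S$ and $G$ act diagonally, it suffices to treat each coordinate $l \in \{1, \ldots, r\}$ separately, integrating the $l$-th integrand against the $H_l$-cylindrical Wiener process $W_{H_l}$. First I would fix $p \ge 2$ with $p > d$ and then pick $\alpha \in (\tfrac{d}{2p}, \tfrac12)$; such an $\alpha$ exists precisely because $p > d$. Put $Y_l := \mathsf{D}((-A_{l,p})^\alpha)$, which is isomorphic to $L^p(\OO)$ and hence a UMD space, so that the stochastic integration theory of \cite{vNVW07} applies to $Y_l$-valued integrands; moreover, by Lemma \ref{l.fdinject}, $Y_l$ embeds continuously into $C(\overline{\OO})$. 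Viewing $C(\overline{\OO}) \hookrightarrow L^p(\OO)$, Lemma \ref{l.propg} gives $G_l(X_l(s)) \in \gamma(\ell^2, L^p(\OO))$ with $\|G_l(X_l(s))\|_{\gamma(\ell^2, L^p(\OO))} \lesssim 1 + \|X_l(s)\|_\infty$.

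Next I would estimate the integrand $\Phi_l(s) := S_{l,p}(t-s)G_l(X_l(s))$, now regarded as a $\gamma(\ell^2, Y_l)$-valued process. Since $\bS_{l,p}$ is analytic, $S_{l,p}(\tau)$ maps $L^p(\OO)$ into $Y_l$ with $\|S_{l,p}(\tau)\|_{\cL(L^p(\OO), Y_l)} = \|(-A_{l,p})^\alpha S_{l,p}(\tau)\|_{\cL(L^p(\OO))} \lesssim \tau^{-\alpha}$ for $0 < \tau \le t$. Combining this with the ideal property of $\gamma$-radonifying operators and the growth bound for $G_l$ yields $\|\Phi_l(s)\|_{\gamma(\ell^2, Y_l)} \lesssim (t-s)^{-\alpha}(1 + \|X_l(s)\|_\infty)$ for $0 \le s < t$. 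The process $\Phi_l$ is $\FF$-progressively measurable and strongly measurable as a $\gamma(\ell^2, Y_l)$-valued process: this follows from progressive measurability of $X_l$, continuity of $G_l : C(\overline{\OO}) \to \gamma(\ell^2, L^p(\OO))$ (Lemma \ref{l.propg}), continuity of $\tau \mapsto S_{l,p}(\tau)$ into $\cL(L^p(\OO), Y_l)$ on $(0,t]$ (again analyticity), and bilinearity of the composition $\cL(L^p(\OO),Y_l) \times \gamma(\ell^2, L^p(\OO)) \to \gamma(\ell^2, Y_l)$.

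To conclude, I would check that $\Phi_l$ represents, $\P$-almost surely, an element of $\gamma(L^2(0,t;\ell^2), Y_l)$. Using the identification $\gamma(L^2(0,t;\ell^2), L^p(\OO)) \cong L^p(\OO; L^2(0,t;\ell^2))$ from \cite[Lemma 2.1]{vNVW08} together with Minkowski's integral inequality (available since $p \ge 2$),
\[
\|\Phi_l\|_{\gamma(L^2(0,t;\ell^2), Y_l)}^2 \lesssim \int_0^t \|\Phi_l(s)\|_{\gamma(\ell^2, Y_l)}^2 \, ds \lesssim \int_0^t (t-s)^{-2\alpha}(1 + \|X_l(s)\|_\infty)^2 \, ds .
\]
Because $2\alpha < 1$, the singular factor $(t-s)^{-2\alpha}$ is integrable over $(0,t)$, and because $X$ has continuous $E$-valued paths, $\sup_{0 \le s \le t}\|X_l(s)\|_\infty < \infty$ $\P$-a.s.; hence the right-hand side is finite $\P$-a.s. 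Localising with the stopping times $\tau_n := \inf\{s \in [0,t] : \|X(s)\|_E \ge n\}$ (with $\inf\emptyset := t$), on $[0, \tau_n]$ the $\gamma$-norm of the integrand admits a deterministic, hence $L^p$-, bound, so $\one_{[0,\tau_n]}\Phi_l$ is stochastically integrable by \cite{vNVW07}; since $\tau_n = t$ for $n$ large enough $\P$-a.s., the integrals $\int_0^t \one_{[0,\tau_n]}(s) S_{l,p}(t-s)G_l(X_l(s))\, dW_{H_l}(s)$ stabilise and their common value is the desired $\cF_t$-measurable, $Y_l$-valued integral. Composing with the embedding $Y_l \hookrightarrow C(\overline{\OO})$ of Lemma \ref{l.fdinject} and assembling the $r$ coordinates yields the claim.

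The main obstacle is the tension between the two constraints on $\alpha$: the analytic-smoothing estimate generates the time singularity $(t-s)^{-\alpha}$, which is square-integrable in $s$ only when $\alpha < \tfrac12$, whereas Lemma \ref{l.fdinject} demands $\alpha > \tfrac{d}{2p}$ in order to embed into $C(\overline{\OO})$; these are simultaneously satisfiable exactly when $p$ is chosen with $p > d$. The remaining ingredients — strong measurability of the operator-valued integrand and the localisation that upgrades the pathwise $\gamma$-bound to genuine ($L^0$-)stochastic integrability — are routine once the results cited above are in place.
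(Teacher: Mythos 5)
Your proposal is correct and follows essentially the same route as the paper: reduce to one coordinate, integrate in the UMD space $\mathsf{D}((-A_{l,p})^\alpha)\cong L^p(\OO)$ with $\alpha\in(\tfrac{d}{2p},\tfrac12)$, combine the analytic smoothing estimate $\|S_{l,p}(\tau)\|_{\cL(L^p(\OO),\mathsf{D}((-A_{l,p})^\alpha))}\lesssim\tau^{-\alpha}$ with the ideal property and the linear growth of $G_l$, and then return to $C(\overline{\OO})$ via Lemma \ref{l.fdinject}. The only cosmetic differences are that the paper invokes the type-$2$ embedding $L^2(0,t;\gamma(H,\mathsf{D}((-A_{p})^\alpha)))\inject\gamma(L^2(0,t;H),\mathsf{D}((-A_{p})^\alpha))$ where you use the square-function identification plus Minkowski, and the paper passes directly through the pathwise $\gamma$-bound and \cite[Theorem 5.9]{vNVW07} without your (harmless) stopping-time localisation.
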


\begin{proof}
It suffices to prove the assertion for $r=1$. We will drop the index $l$ for ease of notation. Fix $\omega \in \Omega$, $t>0$, $p> \max\{2, d/4\}$ and $\alpha \in (\frac{d}{2p}, \half )$. We claim that the map $R_\omega : L^2(0,t; H) \to \mathsf{D}((-A_{p})^\alpha)$ given by
\[
\langle R_\omega f , x^* \rangle := \int_0^t \langle S(t-s)G(X(s, \omega)), x^*\rangle \, ds
\]
for all $x^* \in \mathsf{D}((-A_{p})^\alpha)^*$ is $\gamma$-radonifying. Since $\mathsf{D}((-A_{p})^\alpha)$, being isomorphic to 
$L^p(\Omega)$, has type 2, it suffices to show that $s \mapsto S(t-s)G(X(s, \omega))$ belongs to $L^2(0,t; \gamma (H, \mathsf{D}((-A_{p})^\alpha))$, 
see \cite[Theorem 11.6]{vNsurvey}. To that end, we have
\begin{eqnarray*}
&&\| S(t-\cdot)G(X(\cdot, \omega))\|_{L^2(0,t; \gamma (H, \mathsf{D}((-A_{p})^\alpha))}^2\\
 & = & 
\int_0^t \|S(t-s) G(X(s, \omega))\|_{\gamma (H, \mathsf{D}((-A_{p})^\alpha))}^2\, ds\\
& \leq  & \int_0^t (t-s)^{-2\alpha} \|G(X(s, \cdot))\|_{\gamma (H, \mathsf{D}((-A_{p})^\alpha))}^2\, ds\\
& \leq & \int_0^t (t-s)^{-2\alpha} (|\OO|\|\alpha\|_{\ell^2} + \|\beta\|_{\ell^2}\|X(\cdot, \omega)\|_\infty)^2\, ds < \infty,
\end{eqnarray*}
since $\alpha < \half$.
Here, we have used the estimate $\|S(t)\|_{\cL (L^p(\OO), \mathsf{D}((-A_p)^\alpha))} \lesssim t^{-\alpha}$ and the ideal property for $\gamma$-radonifying operators \cite[Theorem 6.2]{vNsurvey} in the second line and the linear growth of $G$ from Lemma \ref{l.propg} in the third.

It now follows from \cite[Lemma 2.7 and Remark 2.8]{vNVW07}, that $s \mapsto S(t-s)G(X(s,\omega))$ represents a strongly 
measurable map $R: \Omega \to \gamma (L^2(0,t; H); \mathsf{D}((-A_{p})^\alpha))$. As $s \mapsto S(t-s)G(X(s,\omega))$ is $\FF$-progressive, 
it follows from \cite[Theorem 5.9]{vNVW07} that the process is stochastically integrable and the stochastic integral defines a $\mathsf{D}((-A_{p})^\alpha)$-valued random variable. By the embedding from Lemma \ref{l.fdinject}, we are done.
\end{proof}

We may thus define

\begin{defn}
Assume (A), (F) and (G).
A \emph{mild solution} of equation \eqref{eq.scp} is a tupel $((\Omega, \Sigma, \P), \FF, W_H, u)$, where 
$(\Omega, \Sigma, \PP)$ is a probability space endowed with a filtration $\FF$ which satisfies the usual conditions, $W_H$ is 
an $H$-cylindrical Wiener process with respect to $\FF$ and $u = (u(t))_{t \geq}$ is a continuous, $\FF$-progressive, $E$-valued 
process such that for all $t \geq 0$ we have
\begin{equation}\label{eq.mildsol}
u(t) = S(t)u(0) + \int_0^t S(t-s)F(u(s))\, ds + \int_0^t S(t-s)G(u(s))dW_H(s)\, .
\end{equation}
$\P$-almost surely. 

If an initial datum $\xi \in E$ is specified, we say that $((\Omega, \Sigma, \P), \FF, W_H, U)$ is a mild solution 
to the initial value problem corresponding to \eqref{eq.scp}, if it is a mild solution of \eqref{eq.scp} and additionally $\P(U(0) =\xi ) = 1$.
\end{defn}

By the results of \cite[Section 6]{Kmartingale}, the weak and the mild solutions of equation \eqref{eq.scp} coincide. We will thus briefly speak of solutions of equation \eqref{eq.scp}, rather than weak (or mild) solutions.

\subsection{Pathwise uniqueness and strong existence}\label{ss.p}

Typically, when working with stochastically weak solutions, the appropriate uniqueness concept is that of \emph{uniqueness in law}. 
However, we will use the following uniqueness concept:

\begin{defn}
We say that \emph{pathwise uniqueness} holds for \eqref{eq.scp}, if whenever $((\Omega, \Sigma, \P), \FF, W_H, u_j)$ for $j=1,2$
are weak solutions of equation \eqref{eq.scp}, defined on the same probability space and with respect to the same $H$-cylindrical 
Wiener process, satisfying $u_1(0) = u_2(0)$ a.s.\ we have $\P (u_1 = u_2) = 1$.
\end{defn}

For finite dimensional stochastic differential equations, Yamada and Watanabe \cite{yw1} proved that pathwise uniqueness implies uniqueness in law. Moreover, they proved that if solutions exist, 
then they exist strongly, i.e.\ \emph{given} a stochastic basis $(\Omega, \Sigma, \FF, \P)$ on which an $H$-cylindrical 
Wiener process is defined, we can find a continuous, $\FF$-progressive, $E$-valued process $u$ \emph{defined on that stochastic basis}, such that
$((\Omega, \Sigma, \P), \FF, W_H, u)$ is a weak solution. 

These results also generalize to stochastic equations on Banach spaces, see \cite[Section 5]{Kmartingale}.

We will make extensive use of strong existence of solutions in Section \ref{s.existence} to prove existence of solutions to stochastic reaction-diffusion 
systems also for unbounded reaction terms.\smallskip

We should also note that given pathwise uniqueness and existence of solutions for deterministic initial values $\xi$, if follows automatically 
that solutions exist for random initial data $\xi: \Omega \to E$ which are $\cF_0$-measurable. See \cite{Kmartingale} for a proof in 
the infinite dimensional case.

\section{Pathwise uniqueness}\label{s.pathwise}

In this section we prove

\begin{thm}\label{t.uniqueness}
Assume (A), (F) and (G). Then pathwise uniqueness holds for equation \eqref{eq.rds}
\end{thm}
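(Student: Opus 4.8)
The plan is to adapt the Yamada--Watanabe argument to the infinite-dimensional setting, working componentwise and exploiting the diagonal structure of the noise. Fix two weak solutions $u^{(1)}, u^{(2)}$ on the same stochastic basis with the same $W_H$ and $u^{(1)}(0) = u^{(2)}(0)$ a.s. Set $v_l := u^{(1)}_l - u^{(2)}_l$ for $l = 1, \ldots, r$. The key device is to use the \emph{weak} formulation in the resolved form derived after Definition \ref{def.weak}: applying $R(\lambda, A_l)$ and evaluating at a point $x \in \overline{\OO}$, the process $w_l(t,x) := [R(\lambda, A_l) v_l(t)](x)$ satisfies a scalar equation
\[
w_l(t,x) = \int_0^t [A_l R(\lambda, A_l) v_l(s)](x)\, ds + \int_0^t [A_l R(\lambda,A_l)(F_l(u^{(1)}(s)) - F_l(u^{(2)}(s)))](x)\cdot\tfrac1\lambda\, ds + \ldots
\]
more precisely, writing $A_l R(\lambda,A_l) = \lambda R(\lambda,A_l) - I$, one rewrites the drift so that the ``bad'' unbounded part is absorbed into $\lambda w_l(t,x)$ and the remainder involves only $R(\lambda,A_l)$ applied to bounded quantities. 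The stochastic part is $\sum_{k} \int_0^t [R(\lambda,A_l)(G_{l,k}(u^{(1)}(s)) - G_{l,k}(u^{(2)}(s)))](x)\, d\beta_{l,k}(s)$, whose quadratic variation density at the point $x$ is controlled using the $\tfrac12$-H\"older modulus: the crucial point is that $R(\lambda,A_l)$ has a positive integral kernel and integrates to $\tfrac1\lambda$, so by Jensen's inequality $\big(\sum_k [R(\lambda,A_l)(G_{l,k}(u^{(1)}) - G_{l,k}(u^{(2)}))](x)\big)$-type sums are dominated by $R(\lambda,A_l)$ applied to $\rho_{l,m}(|v_l(s,\cdot)|)$ evaluated at $x$.

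The second step is the Yamada--Watanabe approximation of $|\cdot|$. Choose a decreasing sequence $a_n \downarrow 0$ with $\int_{a_n}^{a_{n-1}} \rho_{l,m}(s)^{-1}\, ds = n$ (possible by the divergence assumption (G2b)), and smooth nonnegative functions $\psi_n$ supported in $(a_n, a_{n-1})$ with $0 \le \psi_n(s) \le 2(n\rho_{l,m}(s))^{-1}$ and $\int_0^z\int_0^y \psi_n(s)\,ds\,dy \to |z|$. Put $\phi_n(z) := \int_0^{|z|}\int_0^y \psi_n(s)\,ds\,dy$, a $C^2$ convex approximation of $|z|$ with $|\phi_n'|\le 1$ and $\phi_n'' = \psi_n(|\cdot|)$. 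Apply the one-dimensional It\^o formula to $\phi_n(w_l(t,x))$ for fixed $x$, then integrate over $x \in \overline{\OO}$ (one must first run a localization in $t$ via stopping times $\tau_m := \inf\{t : \|u^{(1)}(t)\|_E \vee \|u^{(2)}(t)\|_E \ge m\}$ to reduce to the local-Lipschitz and local-H\"older regime and get the constants $L_m, \rho_{l,m}$). The It\^o-correction term is bounded, after taking expectations, by $\tfrac12 \expect \int_0^{t\wedge\tau_m} \int_{\overline\OO} \psi_n(|w_l(s,x)|)\,[R(\lambda,A_l)\rho_{l,m}(|v_l(s,\cdot)|)](x)\,dx\,ds$; sending $n\to\infty$ this vanishes by the standard Yamada--Watanabe estimate provided one can control the extra factor coming from $R(\lambda,A_l)$ versus $v_l$ itself.

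The third step handles the drift and the resolvent mismatch. The term $\lambda w_l$ in the drift contributes $\lambda\,\expect\int\int \phi_n'(w_l)\,w_l \ge 0$-type terms that have the \emph{wrong} sign a priori, so one must be careful: here the dissipativity Lemma \ref{l.fdiss} for $H_l$ and the local Lipschitz bound $L_m$ for $K_l$ enter, giving, after summing the components and using $\sum_l \expect\int\int \phi_n'(w_l)(\ldots)$, a bound of the form $\expect \sum_l \|w_l(t\wedge\tau_m)\|_{L^1} \lesssim_{\lambda,m} \int_0^t \expect \sum_l \|v_l(s\wedge\tau_m)\|_{L^1}\,ds + o(1)$. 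Then one needs to pass from $w_l = R(\lambda,A_l)v_l$ back to $v_l$: since $v_l$ is continuous in $C(\overline\OO)$ and $\lambda R(\lambda,A_l) v_l \to v_l$ as $\lambda\to\infty$ uniformly on compact time intervals, letting $\lambda\to\infty$ (after $n\to\infty$) converts the estimate into $\expect \sum_l \|v_l(t\wedge\tau_m)\|_{L^1} \lesssim_m \int_0^t \expect\sum_l\|v_l(s\wedge\tau_m)\|_{L^1}\,ds$, whence $v_l \equiv 0$ up to $\tau_m$ by Gronwall, and finally $\tau_m \uparrow \infty$ a.s.\ gives $\P(u^{(1)} = u^{(2)}) = 1$.

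I expect the main obstacle to be precisely the interchange of the two limits $n\to\infty$ and $\lambda\to\infty$ together with the sign control of the $\lambda w_l$ drift term: one cannot naively send $\lambda\to\infty$ inside the It\^o formula because $A_l R(\lambda,A_l) v_l$ does not converge to anything bounded, so the resolvent must be kept fixed while $n\to\infty$, and only afterwards may $\lambda\to\infty$ be taken, which forces all the intermediate estimates to be uniform in $\lambda$. Making the quadratic-variation bound survive the application of $R(\lambda,A_l)$ — i.e.\ showing $\sum_k\big([R(\lambda,A_l)(G_{l,k}(u^{(1)})-G_{l,k}(u^{(2)}))](x)\big)^2 \le [R(\lambda,A_l)\rho_{l,m}(|v_l|)](x)\cdot\tfrac1\lambda$ via Cauchy--Schwarz/Jensen against the probability kernel $\lambda\,$(kernel of $R(\lambda,A_l)$) — is the technical heart, and it is what replaces the translation-invariance/convolution trick from \cite{mp11,mps06} that is unavailable for variable coefficients.
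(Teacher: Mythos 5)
Your scaffolding---testing the weak formulation against $\lambda R(\lambda,A_l)^*\delta_x$, localizing with the stopping times $\tau_m$, applying the one-dimensional It\^o formula to the Yamada--Watanabe functions $\varphi_n$ pointwise in $x$, integrating over $\OO$ and closing with Gronwall---is exactly the paper's architecture. The gap lies in how you dispose of the second-order term, and the route you sketch does not close. You replace $A_lR(\lambda,A_l)$ by $\lambda R(\lambda,A_l)-I$ and propose to keep $\lambda$ fixed, send $n\to\infty$ first, and only afterwards let $\lambda\to\infty$. This creates two unresolved problems. First, the wrong-sign drift contribution $\lambda\,\expect\int\!\!\int\varphi_n'(w_l)\,w_l$ is not controlled by anything you invoke: Lemma \ref{l.fdiss} concerns the reaction term $H_l$ and sup-norm subdifferentials and cannot absorb a term coming from $A_l$; moreover, with the normalization $\lambda R(\lambda,A_l)$ (which you need, since otherwise the left-hand side degenerates to $\expect\|R(\lambda,A_l)v_l\|_{L^1}\to 0$) the crude bound $|A_l\lambda R(\lambda,A_l)v_l|\le\lambda^2R(\lambda,A_l)|v_l|+\lambda|v_l|$ is not uniform in $\lambda$, so a Gronwall estimate with $\lambda$-dependent constants yields nothing in the limit. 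Second---the point you yourself call the ``technical heart'' but leave open---after Jensen against the kernel of $\lambda R(\lambda,A_l)$ the It\^o correction is controlled by $\psi_n(|w_l(x)|)\,[\lambda R(\lambda,A_l)\rho_{l,m}(|v_l|)](x)$, and the Yamada--Watanabe cancellation requires $\rho_{l,m}$ evaluated at the \emph{same} argument as $\psi_n$. Here the denominator produced by $\psi_n\le 2/(n\rho_{l,m})$ sees the smoothed quantity $|w_l(x)|$, while the numerator is an average of $\rho_{l,m}(|v_l|)$; already for $\rho_{l,m}(s)=Cs$ (the basic example) the ratio $[\lambda R(\lambda,A_l)|v_l|](x)\,/\,|[\lambda R(\lambda,A_l)v_l](x)|$ is unbounded near sign changes of the smoothed difference, so the correction term need not vanish as $n\to\infty$ at fixed $\lambda$.

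The paper avoids both problems by using the divergence-form structure instead of the algebraic identity: since $\Delta_\lambda u_l:=\lambda R(\lambda,A_l)(u_l^1-u_l^2)\in\mathsf{D}(A_{l,2})\subset H^1(\OO)$ and $\varphi_n'$ is Lipschitz with $\varphi_n''\ge 0$, the chain rule and ellipticity give
\[
\int_\OO\varphi_n'(\Delta_\lambda u_l)\,A_l\Delta_\lambda u_l\,dx
=-\mathfrak{a}^l\big[\Delta_\lambda u_l,\varphi_n'(\Delta_\lambda u_l)\big]\le 0,
\]
because the principal part of the form equals $\int_\OO\sum_{i,j}a^l_{ij}\,\varphi_n''(\Delta_\lambda u_l)\,D_i\Delta_\lambda u_l\,D_j\Delta_\lambda u_l\,dx\ge 0$. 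Thus the second-order term is simply discarded \emph{before} any limit, which reverses your order of limits: with that term gone, all remaining integrands are bounded and continuous, so one lets $\lambda\to\infty$ at fixed $n$ (using $\lambda R(\lambda,A_l)w\to w$ in $C(\overline{\OO})$); only then does the quadratic-variation term have matched arguments, $\varphi_n''(\Delta u_l)\sum_k(\Delta G_{l,k})^2\le 2/n$, after which $n\to\infty$, Gronwall and $m\to\infty$ finish the proof. Without this sign argument (or an equivalent Kato-type inequality playing the same role), your version of the argument has a genuine hole at exactly the step that distinguishes variable-coefficient operators from the convolution trick of the constant-coefficient case.
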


\begin{proof}
We may (and shall) assume that $\rho_{l,m} (t) \geq t$ for $l=1, \ldots, r$ and $m \in \CN$, otherwise replacing $\rho_{l, m}(t)$ with $\rho_{l,m} (t) + t$.

Let us fix $l \in \{1, \cdots, r\}$ and $m \in \CN$.
Similar as in the proof of the classical Yamada-Watanabe theorem \cite{yw1}, we chose a decreasing sequence $a_n \downarrow 0$
such that $a_0 =1$ and 
\[
\int_{a_n}^{a_{n-1}} \frac{1}{\rho_{l,m} (t)} \, dt =n.
\]
This is possible by assumption (G2b). Note that the sequence $(a_n)$, as well as the functions $\psi_n$ and $\varphi_n$ introduced next, depend 
on $l$ and $m$. 
Next we pick functions $\psi_n \in C_c^\infty (\CR)$ such that 
$\mathrm{supp} \, \psi_n \subset (a_n, a_{n-1})$ and 
\[
0 \leq \psi_n (t) \leq \frac{2}{n \rho_{l,m} (t)} \leq \frac{2}{nt}
\quad\mbox{and}\quad \int_{a_n}^{a_{n-1}} \psi_n (t) \, dt = 1.
\]
We define 
\[
\varphi_n (t) := \int_0^{|t|} \int_0^s \psi_n (\tau) \, d\tau\, ds\, .
\]
Then $\varphi_n \in C^\infty(\CR)$ with 
\[
\varphi_n'(t) = \mathrm{sgn}(t) \int_0^{|t|} \psi_n (s)\, ds \quad \mbox{and}\quad 
\varphi_n''(t) = \psi_n (|t|).
\]
We note that
\[
|t| - a_{n-1} = \int_0^{|t|} \one_{(a_{n-1}, \infty)}(s) \, ds \leq \varphi_n (t) \leq |t|,
\]
which implies that $\varphi_n (t) \to |t|$, uniformly on $\CR$. Moreover, $\varphi_n'(t)t = |t| \int_0^{|t|} \psi_n(s)\, ds$
converges to $|t|$ pointwise.\medskip 

After this preparation, we now come to the main part of the proof. Let $u_1:= (u_1^1, \ldots, u_r^1)$ and 
$u_2 = (u_1^2, \ldots, u_r^2)$ be two weak solutions of \eqref{eq.rds}, defined on the same probability space and with respect to 
the same sequence of Brownian motions $(\beta_{l,k})$. Moreover, we assume that $u_1(0) = u_2(0)$ almost surely.

For $m \in \CN$ we define the stopping time $\tau_m$ by 
\[
\tau_m := \inf\big\{ t> 0\, :\, \|u_1(t)\|_E\vee \|u_2(t)\|_E > m \big\},
\]
where we set $\inf\emptyset = \infty$.

For $\lambda >0$, $l \in \{1, \ldots, r\}$ and $x \in \overline{\OO}$ the vector $x^* = (\mu_1, \ldots, \mu_r)$, defined 
by $\mu_l^* := \lambda R(\lambda, A_l)^*\delta_x$ and $\mu_j^* = 0$ for $j \neq l$, is an element of $\mathsf{D}(A^*)$. Thus, 
since $u_1$ and $u_2$ are weak solutions with $u_1(0) = u_2(0)$ almost surely, we have, almost surely,
\[
\begin{aligned}
\big[ \lambda R(\lambda, & A_l)(u^1_l(t\wedge \tau_m) - u_l^2(t\wedge \tau_m)\big](x)\\
& = \int_0^{t\wedge \tau_m}\big[A_l\lambda R(\lambda, A_l)(u_l^1(s) - u_l^2(s))\big](x)\, ds\\
& \quad + \int_0^{t\wedge \tau_m}\big[\lambda R(\lambda, A_l)(F_l(u_1(s)) - F_l(u_2(s)))\big](x)\, ds\\
& \quad + \sum_{k=1}^\infty \int_0^t \one_{[0,\tau_m]}(s)\big[
\lambda R(\lambda, A_l) \big(G_{l,k}(u_l^1(s)) - G_{l,k}(u_l^2(s))\big)\big](x)\, d\beta_{l,k}(s)\, .
\end{aligned}
\]
To simplify notation, we introduce some abbreviations. We write 
\begin{eqnarray*}
\Delta_\lambda u_l(t) &:= & 
\lambda R(\lambda, A_l)\big[ u^1_l(t\wedge \tau_m) - u_l^2(t\wedge \tau_m)\big]\\
\Delta_\lambda F_l(t) & := & \lambda R(\lambda, A_l)\big[(u_1(t\wedge \tau_m)) - F_l(u_2(t\wedge \tau_m))\big]\\
\Delta_{\lambda}G_{l,k}(t) & := & \lambda
R(\lambda, A_l) \big[G_{l,k}(u_l^1(t\wedge \tau_m)) - G_{l,k}(u_l^2(t\wedge\tau_m))\big].
\end{eqnarray*}

From It\^o's formula it follows that, almost surely,
\[
\begin{aligned}
\varphi_n & (\Delta_\lambda u_l(t)(x)) = \int_0^{t\wedge \tau_m} \varphi_n'(\Delta_{\lambda}u_l(s)(x))
\big[ A_l \Delta_\lambda u_l(s)(x) + \Delta_\lambda F_l (s)(x) \big]\, ds\\
& + \half \int_0^{t\wedge \tau_m} \varphi_n''(\Delta_{\lambda}u_l(s)(x)) \sum_{k=1}^\infty \big[ \Delta_\lambda G_{k,l}(s)(x)\big]^2\, ds
+ \,\,\mbox{a martingale}.
\end{aligned}
\]
As this is true for every $x \in \overline{\OO}$, we may integrate over $\OO$ and take expectations. This yields
\begin{equation}\label{eq.ilambdan}
\begin{aligned}
\expect \int_\OO \varphi_n & (\Delta_\lambda u_l (t)(x))\, dx  = 
\expect \int_0^{t\wedge \tau_m}\int_\OO \varphi_n '(\Delta_\lambda u_l(s)(x)) A_l \Delta_\lambda u_l (s)(x)\, dx\, ds\\
& \quad + \expect \int_0^{t\wedge \tau_m}\int_\OO \varphi_n '(\Delta_\lambda u_l(s)(x))\Delta_\lambda F_l (s)(x)\, dx\, ds\\
& \quad + \half \expect \int_0^{t\wedge \tau_m}\int_\OO \varphi_n ''(\Delta_\lambda u_l(s)(x))\sum_{k=1}^\infty \big[\Delta_\lambda G_{l,k} (s)(x)\big]^2\, dx\, ds\\
& =: I_1(n,\lambda) + I_2(n,\lambda) + I_3(n, \lambda).
\end{aligned}
\end{equation}
We proceed in several steps.\medskip 

{\it Step 1:} We estimate $I_1(n, \lambda)$ and let $\lambda \to \infty$.

Noting that $\varphi_n '(\Delta_\lambda u_l(s)) \in H^1(\OO)$ by the chain rule, we find that
\begin{eqnarray*}
I_1(n, \lambda) & = & \expect \int_0^{t\wedge \tau_m} \big(  A_l \Delta_\lambda u_l (s), \varphi_n '(\Delta_\lambda u_l(s)) \big)_{L^2(\OO)}
\, ds\\
& = &  - \expect \int_0^{t\wedge \tau_m} \mathfrak{a}^l \big[ \Delta_\lambda u_l (s) , \varphi_n '(\Delta_\lambda u_l(s))\big]
\, ds \, .
\end{eqnarray*}
Next observe that
\[
\begin{aligned}
&  \quad \mathfrak{a}^l \big[ \Delta_\lambda u_l (s) , \varphi_n '(\Delta_\lambda u_l(s))\big]\\
 & = \int_\OO \sum_{i,j=1}^d a_{ij}^l \cdot (D_i\Delta_\lambda u_l(s))\cdot \varphi_n''(\Delta_\lambda u_l (s)) \cdot (D_j\Delta_\lambda u_l(s))
\, dx\\
& \geq \int_\OO \eta \varphi_n''(\Delta_\lambda u_l (s))\sum_{i=1}^d |D_i\Delta_\lambda u_l(s)|^2 \, dx \geq 0\, ,
\end{aligned}
\]
since $\varphi_n''\geq 0$.
It thus follows that $I_1(n,\lambda) \leq 0$.

We now abbreviate
\[
\Delta u_l (t) := u_l^1(t\wedge \tau_m) - u_l^2(t\wedge \tau_m), \quad
\Delta F_l (t) := F_l (u_1(t\wedge \tau_m)) - F_l(u_2(t\wedge \tau_m)) 
\]
and
\[
\Delta G_{l,k} (t) := G_{l,k}(u_l^1(t\wedge \tau_m)) - G_{l,k}(u_l^2(t\wedge \tau_m)).
\]
For $w \in C(\overline{\OO})$ we have that $\lambda R(\lambda, A_l)w$ converges to $w$ in $C(\overline{\OO})$ as $\lambda \to \infty$. 
Thus, it follows that $\Delta_\lambda u_l(t) \to \Delta u_l(t)$, $\Delta_\lambda F_l (t) \to \Delta F_l (t)$ and  
$\Delta_\lambda G_{l,k}(t) \to \Delta G_{l,k}(t)$ in $C(\overline{\OO})$ as $\lambda \to \infty$ for every $t >0$. 
Inserting the estimate for $I_1(n,\lambda)$ into \eqref{eq.ilambdan} and letting $\lambda \to \infty$ we obtain, using the continuity of  $\varphi_n, \varphi_n'$ and $\varphi_n''$, 
\[
\begin{aligned}
\expect \int_\OO \varphi_n  (\Delta u_l (t)(x))\, dx &
\leq \expect \int_0^{t\wedge \tau_m}\int_\OO \varphi_n '(\Delta u_l(s)(x))\Delta F_l (s)(x)\, dx\, ds\\
& \quad + \half \expect \int_0^{t\wedge \tau_m}\int_\OO \varphi_n ''(\Delta u_l(s)(x))\sum_{k=1}^\infty \big[\Delta G_{l,k} (s)(x)\big]^2\, dx\, ds\\
& =: J_1(n) + J_2(n)
\end{aligned}
\]

{\it Step 2:} We estimate $J_1(n)$ and $J_2(n)$ and let $n \to \infty$.

Since $f_l$ is locally Lipschitz continuous by (F) and since $|\varphi_n'| \leq 1$, it follows that, for a constant $L_m \geq 0$, we have 
\[
J_1(n) \leq L_m \expect \int_0^{t\wedge \tau_m} \int_\OO \sum_{j=1}^r |\Delta u_j (s)(x)|\, dx \, ds \leq L_m \expect \int_0^{t}
 \int_\OO \sum_{j=1}^r |\Delta u_j (s)(x)|\, dx \, ds\, . 
\]
As for $J_2(n)$, using (G2) and the estimate $\varphi_n''(t) = \psi_n (t) \leq \frac{2}{n\rho_{l,m}(t)}$, we see that
\begin{eqnarray*}
 J_2(n)  
& \leq & \half \expect \int_0^{t\wedge\tau_m} \int_\OO \one_{\CR \setminus \{0\}}(\Delta u_l (s)(x)) \frac{2 \sum_{k=1}^\infty \sigma_{l,k, m}(|\Delta u_l(s)(x)|)^2}{n\rho_{l, m} (|\Delta u_l (s)(x)|)} 
\, dx \, ds \\
& =  & \half \expect \int_0^{t\wedge\tau_m} \int_\OO \one_{\CR \setminus \{0\}}(\Delta u_l (s)(x)) \frac{2\rho_{l,m} (\Delta  (s)(x)) dx }{n\rho_{l,m} (|\Delta u_l (s)(x)|)} 
 \, ds \leq \frac{t}{n}
\end{eqnarray*}
Combining these estimates, we find that
\[
\expect \int_\OO \varphi_n  (\Delta u_l (t)(x))\, dx \leq L_m \int_0^{t} \int_\OO \sum_{j=1}^r |\Delta u_j (s)(x)|\, dx \, ds
+ \frac{t}{n} \]
Since $\varphi_n (t) \uparrow |t|$ as $n \to \infty$, it follows upon $n \to \infty$ that 
\begin{equation}\label{eq.estl}
\expect \int_\OO |\Delta u_l (t,x)|\, dx 
\leq  L_m \expect \int_0^{t} \int_\OO \sum_{j=1}^r |\Delta u_j (s)(x)|\, dx \, ds 
\end{equation}

{\it Step 3:} We finish the proof.

As equation \eqref{eq.estl} is true for every $l=1, \ldots, r$, we find, summing up, that 
\[
\expect \int_\OO \sum_{l=1}^r |\Delta u_l (t,x)|\, dx \leq rL_m \int_0^t \expect \int_\OO \sum_{l=1}^r |\Delta u(s)(x)|\, dx\, ds
\]
Thus, by Gronwall's Lemma, 
\[
\expect \int_\OO \sum_{l=1}^r |\Delta u_l (t,x)|\, dx  = \expect \int_\OO \sum_{l=1}^r |u_l^1(t\wedge \tau_m, x) - u_l^2(t\wedge \tau_m, x)|
\, dx \equiv 0 \, .
\]
Since $t \wedge \tau_m \to t$ almost surely, upon $m \to \infty$ it follows that 
\[ \expect \int_\OO \sum_{l=1}^r |u_l^1(t, x) - u_l^2(t, x)|
\, dx = 0
\]
for all $t \geq 0$. As solutions are continuous in $x$, it follows that $u_1(t) = u_2(t)$ almost surely, for every $t \geq 0$. Finally, 
by continuity of the paths, the exceptional set may be chosen independently of $t$, hence $u_1 = u_2$ almost surely, i.e.\ pathwise 
uniqueness.
\end{proof}

\begin{rem}
Note that in the proof of Theorem \ref{t.uniqueness} neither the special structure of the functions $f_l$ in condition (F), nor the linear growth 
assumption on the functions $g_{l,k}$ in condition (G2) was used.  Thus pathwise uniqueness holds already if the functions $f_l$ are locally Lipschitz continuous and the functions $g_{l,k}$ satisfy (G2).
\end{rem}

\section{Positivity}\label{s.positive}

Under additional assumptions on the nonlinearities $f_l$ and $g_{k,l}$, the techniques from Section \ref{s.pathwise} can also be used to prove 
that the solutions of equation \eqref{eq.rds} for positive initial data are almost surely positive. For deterministic reaction-diffusion equations,
the solutions preserve positivity if and only if the reaction term is \emph{quasi positive} \cite{p10}.

\begin{defn}
A map $\Phi = (\Phi_1, \ldots, \Phi_r) : \overline{\OO}\times \CR^r \to \CR^r$ is called \emph{quasi positive} if for every $l=1, \ldots, r$ 
and $s= (s_1, \ldots, s_r)$ with $s_l =0$ and $s_j \geq 0$ for all $j \neq l$ we have $\Phi_l (x, s) \geq 0$ for all $x \in \overline{\OO}$.
\end{defn}

In what follows, we write $s^+ := s \vee 0$ for the positive part of a real number $s$ and $s^-:= (-s)\vee 0$ for the negative part of $s$.

\begin{lem}\label{l.qpos}
Let $\Phi : \overline{\OO}\times \CR^k \to \CR^k$ be quasi positive and locally Lipschitz continuous in the sense that for $n \in \CN$ there 
exists a constant $L_n$ such that
\[
|\Phi_l(x, s_1, \ldots, s_r) - \Phi_l (x, t_1, \ldots, t_n)|\leq L_n \sum_{j=1}^r|s_j-t_j| \quad 
\forall\, x \in \overline{\OO}, s_j,t_j \in [-n,n]\,.
\]
Then for every $l \in \{1, \ldots, r\}$, $x \in \overline{\OO}$ and $s_1, \ldots, s_r \in [-n,n]$ with $s_l \leq 0$ we have
\begin{equation}\label{eq.qpos}
-\Phi_l(x, s_1, \ldots, s_r) \leq L_n \sum_{j=1}^r s_j^-\, .
\end{equation}
\end{lem}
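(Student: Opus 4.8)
The plan is to exploit quasi positivity by evaluating $\Phi_l$ at a well-chosen point obtained from $(s_1,\dots,s_r)$ by replacing the negative entries with zero, and then using the local Lipschitz bound to compare with $\Phi_l(x,s_1,\dots,s_r)$. Concretely, fix $l$, $x\in\overline\OO$ and $s_1,\dots,s_r\in[-n,n]$ with $s_l\le 0$. Set $t_l:=0$ and, for $j\neq l$, $t_j:=s_j^+=s_j\vee 0$. Then $t_l=0$, $t_j\ge 0$ for all $j\neq l$, and each $t_j\in[0,n]\subset[-n,n]$, so by quasi positivity $\Phi_l(x,t_1,\dots,t_r)\ge 0$, whence $-\Phi_l(x,t_1,\dots,t_r)\le 0$.

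Next I would add and subtract this term: $-\Phi_l(x,s_1,\dots,s_r)=\big(\Phi_l(x,t_1,\dots,t_r)-\Phi_l(x,s_1,\dots,s_r)\big)-\Phi_l(x,t_1,\dots,t_r)\le \Phi_l(x,t_1,\dots,t_r)-\Phi_l(x,s_1,\dots,s_r)$. Applying the local Lipschitz hypothesis — legitimate since all the $s_j$ and $t_j$ lie in $[-n,n]$ — gives
\[
-\Phi_l(x,s_1,\dots,s_r)\le L_n\sum_{j=1}^r |t_j-s_j|.
\]
It remains to observe that $|t_j-s_j|=s_j^-$ for every $j$: for $j=l$ we have $t_l-s_l=0-s_l=-s_l=s_l^-$ since $s_l\le 0$; for $j\neq l$ we have $t_j-s_j=s_j^+-s_j=s_j^-$, and $s_j^-\ge 0$, so the absolute value is indeed $s_j^-$. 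Summing yields exactly \eqref{eq.qpos}.

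There is no real obstacle here; the only point requiring a moment's care is checking that the comparison point $(t_1,\dots,t_r)$ genuinely satisfies the hypotheses of the definition of quasi positivity (namely $t_l=0$ and $t_j\ge 0$ for $j\neq l$) and that it stays inside the box $[-n,n]^r$ so that the local Lipschitz estimate applies. Both are immediate from $t_j=s_j^+\in[0,n]$ and $t_l=0$.
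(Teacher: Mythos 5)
Your proof is correct. It differs from the paper's argument in how the negative coordinates are removed: you pass in one step to the comparison point $t$ with $t_l=0$ and $t_j=s_j^+$ for $j\neq l$, invoke quasi positivity once at this point to discard $-\Phi_l(x,t)\le 0$, and apply the local Lipschitz bound a single time, reading off $|t_j-s_j|=s_j^-$ directly. The paper instead proceeds iteratively: it zeroes the coordinate $s_l$ first, and then, in case the resulting value $\Phi_l(x,0,s_2,\dots,s_r)$ is still negative, uses quasi positivity to find a further negative coordinate, zeroes that one, and repeats, accumulating the terms $L_n s_j^-$ one at a time until either the value becomes nonnegative or all coordinates have been replaced by $0$, where quasi positivity applies. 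Your single-step decomposition buys a shorter argument with no case distinction or iteration, and it makes the appearance of the full sum $L_n\sum_{j=1}^r s_j^-$ transparent; the paper's coordinate-by-coordinate scheme uses exactly the same two ingredients (quasi positivity and the local Lipschitz estimate on $[-n,n]^r$) but in a more laborious bookkeeping. Both yield the same bound, and your verification that $t$ satisfies the quasi positivity hypotheses and stays in the box $[-n,n]^r$ covers the only points that need care.
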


\begin{proof}
Without loss of generality, we assume that $l=1$. We have
\begin{eqnarray*}
-\Phi_1(x, s_1, \ldots, s_r) & = & \Phi_1(x, 0, s_2, \ldots, s_r) - \Phi_1(x, s_1, \ldots, s_n) - \Phi_1(x, 0, s_2, \ldots, s_r)\\
& \leq & L_n |0-s_1| - \Phi_1(x, 0, s_2, \ldots, s_r)\\
& = & L_n s_1^- - \Phi_1(x, 0, s_2, \ldots, s_r)\, .
\end{eqnarray*}
If $\Phi_1(x, 0, s_2, \ldots, s_r) \geq 0$, this already yields \eqref{eq.qpos}. Otherwise, since $\Phi$ is quasi positive, there exists 
an index $2\leq j_0 \leq r$ with $s_{j_0} < 0$. We assume without loss of generality that $j_0 =2$. The same estimate as above shows that
\[-\Phi_1(x, 0, s_2, \ldots, s_r) \leq L_n s_2^- - \Phi_1(x,0,0,s_3, \ldots, s_r) \,.\]
These arguments are now iterated. Doing this, we have proved \eqref{eq.qpos} at some point or, after $r$ iterations, we have proved that 
\[
- \Phi_1(x,s_1, \ldots, s_r) \leq L_n\sum_{j=1}^r s_j^- - \Phi_1(x, 0, \ldots, 0)
\]
which also yields \eqref{eq.qpos}, as $\Phi_1(x, 0, \ldots, 0) \geq 0$ by quasi positivity.
\end{proof}

\begin{thm}\label{t.positive}
Assume (A), (F) and (G). Additionally, assume that $f: \overline{\OO}\times \CR^r \to \CR^r$ is quasi positive and that 
$g_{l,k}(x,0) = 0$ for all $x \in \overline{\OO}$, $l=1, \ldots, r$ and $k \in \CN$. 

If $u= (u_1, \ldots, u_r)$ is a solution of \eqref{eq.rds} with $u_1(0), \ldots, u_r(0) \geq 0$ almost surely, 
then almost surely also $u_l (t,x) \geq 0$ for all $t \geq 0$ and $x \in \overline{\OO}$.
\end{thm}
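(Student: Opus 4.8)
The plan is to mimic the proof of Theorem \ref{t.uniqueness}, replacing the difference $\Delta u_l = u_l^1 - u_l^2$ with the single solution $u_l$ and the modulus $|\cdot|$ with the negative part $(\cdot)^-$. Concretely, I would first reduce to the resolvent-smoothed process: for $\lambda > 0$, $l \in \{1,\dots,r\}$ and $x \in \overline{\OO}$, apply the weak formulation \eqref{eq.weaksol} with $x^* = \lambda R(\lambda, A_l)^*\delta_x$ (and zero in the other components), obtaining a real It\^o process $\lambda R(\lambda,A_l)u_l(t\wedge\tau_m)(x)$ with drift coming from $A_l \lambda R(\lambda, A_l)u_l$ and $\lambda R(\lambda, A_l)F_l(u)$ and diffusion coefficients $\lambda R(\lambda, A_l)G_{l,k}(u_l)$, where $\tau_m$ is the same stopping time as before.

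Next I would use a smooth approximation of the negative part. Instead of $\varphi_n$ approximating $|t|$, I would build functions $\chi_n \in C^\infty(\CR)$ with $\chi_n(t) \to t^-$ uniformly, $\chi_n$ supported (in its nonconstant behaviour) on $(-\infty, 0)$, with $-1 \leq \chi_n'(t) \leq 0$, $\chi_n' \equiv 0$ on $[0,\infty)$, and $0 \leq \chi_n''(t) \leq \frac{2}{n\rho_{l,m}(|t|)}$ supported in $(-a_{n-1}, -a_n)$ — the mirror image of the construction with $\psi_n$. (Equivalently, set $\chi_n(t) := \varphi_n(t^-)$ up to the smoothing near $0$.) Applying It\^o's formula to $\chi_n(\lambda R(\lambda, A_l)u_l(t\wedge\tau_m)(x))$, integrating over $\OO$ and taking expectations gives three terms $I_1 + I_2 + I_3$ as in \eqref{eq.ilambdan}. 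For $I_1$, the bilinear form $\mathfrak{a}^l[\Delta_\lambda u_l, \chi_n'(\Delta_\lambda u_l)]$ again has integrand $\sum a_{ij}^l (D_i \cdot)\chi_n''(\cdot)(D_j\cdot) \geq 0$ by ellipticity and $\chi_n'' \geq 0$, plus the zero-order term $\int c_l \Delta_\lambda u_l \, \chi_n'(\Delta_\lambda u_l)$; since $\chi_n'(t) \leq 0$ and $\chi_n'(t) = 0$ for $t \geq 0$, we have $t\chi_n'(t) \leq 0$, so after adjusting $c_l \geq 0$ (WLOG by the stability normalization) this term has a favorable sign, giving $I_1(n,\lambda) \leq 0$. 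Letting $\lambda \to \infty$ and using $\lambda R(\lambda, A_l)w \to w$ in $C(\overline{\OO})$ removes the resolvents.

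For $I_3$ (the martingale-bracket term), since $g_{l,k}(x,0) = 0$ and $g_{l,k}$ is locally $\half$-H\"older, on $\{u_l(s)(x) < 0\}$ we have $|G_{l,k}(u_l(s))(x)| = |g_{l,k}(x,u_l(s)(x)) - g_{l,k}(x,0)| \leq \sigma_{l,k,m}(|u_l(s)(x)|)$, so $\sum_k [G_{l,k}(u_l(s))(x)]^2 \leq \rho_{l,m}(|u_l(s)(x)|)$; combined with $\chi_n'' \leq \frac{2}{n\rho_{l,m}}$ and the fact that $\chi_n''$ is supported in $(-\infty,0)$, this yields $I_3(n) \leq t/n \to 0$, exactly as for $J_2(n)$. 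For $I_2$ (the drift term), quasi positivity of $f$ together with Lemma \ref{l.qpos} is the key input: on $\{u_l(s)(x) < 0\}$, $\chi_n'(u_l(s)(x)) \leq 0$ and $-f_l(x, u_1(s)(x),\dots) \leq L_m \sum_j u_j(s)(x)^-$, so $\chi_n'(u_l(s)(x)) f_l(x,u(s)(x)) = -|\chi_n'(u_l(s)(x))|f_l \leq |\chi_n'(u_l(s)(x))| \cdot L_m \sum_j u_j(s)(x)^- \leq L_m \sum_j u_j(s)(x)^-$ since $|\chi_n'| \leq 1$; on $\{u_l(s)(x)\geq 0\}$ the integrand vanishes. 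Thus $I_2(n) \leq L_m \expect\int_0^t \int_\OO \sum_j u_j(s)(x)^- \, dx\, ds$. Passing $n\to\infty$ with $\chi_n(t)\uparrow t^-$ gives $\expect\int_\OO u_l(t\wedge\tau_m,x)^- \, dx \leq L_m \expect\int_0^t\int_\OO \sum_j u_j(s)^-\,dx\,ds$; summing over $l$, applying Gronwall (using that $u_l(0)^- = 0$ a.s.), and then letting $m\to\infty$ forces $u_l(t,x)^- = 0$ for a.e.\ $x$, all $t$, a.s.; continuity in $x$ and then in $t$ upgrades this to $u_l(t,x)\geq 0$ for all $t$ and $x$ simultaneously.

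The main obstacle I anticipate is handling the reaction term $I_2$ cleanly: one must make sure the local Lipschitz estimate from Lemma \ref{l.qpos} can be invoked under the stopping time $\tau_m$ (so that all arguments lie in $[-m,m]$), and that the decomposition $f_l = h_l + k_l$ in (F) does not cause trouble — but since (F) already guarantees joint local Lipschitz continuity of $f_l$, and quasi positivity is assumed for the full $f$, Lemma \ref{l.qpos} applies directly with constant $L_m$. A minor technical point is that the sign of the zero-order term $\int_\OO c_l \Delta_\lambda u_l \chi_n'(\Delta_\lambda u_l)\,dx$ must be controlled; this is why one normalizes $c_l$ so the semigroup is exponentially stable (equivalently $c_l$ bounded below by a positive constant, which one may absorb as discussed after (A), possibly at the cost of modifying $f_l$ — but that modification preserves quasi positivity of the relevant term since it only subtracts a multiple of $u_l$ which is $\leq 0$ on the relevant set). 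Everything else is a routine transcription of the Yamada–Watanabe argument already carried out in Theorem \ref{t.uniqueness}.
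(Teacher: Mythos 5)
Your proposal is correct and follows essentially the same route as the paper: there the computation of Theorem \ref{t.uniqueness} is repeated with $u_1\equiv 0$ and $u_2:=u$, using the one-sided approximant $\varphi_n(t):=\one_{(0,\infty)}(t)\int_0^t\int_0^s\psi_n(\tau)\,d\tau\,ds\uparrow t^+$ applied to $-u_l$ (the mirror image of your $\chi_n\approx t^-$), and the drift and noise terms are handled exactly as you do, via Lemma \ref{l.qpos} with the stopping time $\tau_m$ and via $g_{l,k}(x,0)=0$ together with (G2), followed by Gronwall and continuity. Only note a small sign slip: for your $\chi_n$ one has $t\chi_n'(t)\ge 0$ rather than $\le 0$, and it is precisely this inequality that makes the zero-order term $\int_\OO c_l\,\Delta_\lambda u_l\,\chi_n'(\Delta_\lambda u_l)\,dx$ harmless when $c_l\ge 0$, so your conclusion that $I_1(n,\lambda)\le 0$ stands.
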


\begin{proof}
We pick the sequence $a_n$ and the function $\psi_n$ as in the proof of Theorem \ref{t.uniqueness} and define
\[
\varphi_n (t) := \one_{(0,\infty)} \int_0^t\int_0^s \psi_n (\tau)\, d\tau\, ds\, .
\]
Then $\varphi_n \in C^\infty (\CR)$ with $\varphi_n(t) \uparrow t^+$. Moreover, 
\[
\varphi_n'(t) = \one_{(0,\infty)} \int_0^t \psi_n (s)\, ds \quad \mbox{and}\quad \varphi_n''(t) = \one_{(0,\infty)}\psi_n (t)\, .
\]
In particular, $\varphi_n '(t) \uparrow \one_{(0,\infty)}$ for all $t \in \CR$.\medskip 

Given a solution $u$ of \eqref{eq.rds} with $u_1(0), \ldots, u_r(0)\geq 0$ almost surely, we
repeat the computations and estimates in the proof of Theorem \ref{t.uniqueness} with 
$u_1 \equiv 0$ and $u_2=: u$. Doing so, we obtain at the end of Step 1
\[
\begin{aligned}
& \quad \expect  \int_\OO  \varphi_n (- u_l (t\wedge \tau_m,x))\\
 & \leq 
\expect \int_0^{t\wedge \tau_m} \int_\OO \varphi_n'(-u_l(s\wedge \tau_m, x)) \cdot (- f_l(x, u(s\wedge \tau_m, x))\, dx\, ds\\
& \quad + \half \expect \int_0^{t\wedge \tau_m} \int_\OO \varphi_n''(-u_l(s\wedge \tau_m, x))\sum_{k=1}^\infty \big[- g_{l,k}(x, u_l(s \wedge \tau_m, x))
\big]^2 dx\, ds\\
& =: J_1(n) + J_2(n)\, .
\end{aligned}
\]
Note that we do not obtain an extra term for the initial datum, as $\varphi_n (-v_l(x)) = 0$ almost surely. We now estimate the terms
$J_1(n)$ and $J_2(n)$.

Since $\varphi_n'(t) =0$ for $t\leq 0$, the integrand in $J_1(n)$ vanishes, unless $u_l (s \wedge \tau_n, x) < 0$. In that case, since
$f$ is quasipositive and locally Lipschitz continuous, it follows from Lemma \ref{l.qpos} that
\[
J_1(n) \leq \expect \int_0^{t\wedge \tau_m} \int_\OO \varphi_n'(-u_l(s\wedge \tau_m, x)) L_m \sum_{j=1}^r u_j(s\wedge \tau_m, x)^-\, dx\, ds \, .
\]
Concerning $J_2(n)$, since $g_{l,k}(x,0) =0$, we obtain
\begin{eqnarray*}
&&J_2(n)\\
 & = & \half \expect \int_0^{t\wedge \tau_m} \int_\OO 
\varphi_n''(-u_l(s\wedge \tau_m, x))\sum_{k=1}^\infty \big[g_{l,k}(x, 0)- g_{l,k}(x, u_l(s \wedge \tau_m, x))\big]^2 dx\, ds\\
& \leq & \half \expect \int_0^{t\wedge \tau_m} \int_\OO \one_{(0,\infty)}(-u_l (s\wedge \tau_m,x))
\frac{2\rho_l(-u_l (s\wedge \tau_m,x))}{n\rho_l (-u_l (s\wedge \tau_m,x))}\, dx \, ds \leq  \frac{t}{n}
\end{eqnarray*}
With these estimates and $n\to \infty$, it follows that
\[
\expect \int_\OO u_l (t\wedge \tau_m, x)^-\, dx \leq L_m \int_0^t \expect \int_\OO \sum_{j=1}^r u_j(s\wedge \tau_m, x)^-\, dx\, ds 
\]
since $\varphi_n (-u_l(t\wedge \tau_m), x)) \uparrow u_l (t\wedge \tau_m, x)^-$. 

Now the same arguments as in the proof of Theorem \ref{t.uniqueness} show that $\sum_{l=1}^r u_l^- =0$ almost surely, hence, almost surely,
$u_l \geq 0$ for all $l=1, \ldots, r$.
\end{proof}

\section{Existence of solutions}\label{s.existence}

We now prove existence of solutions to equation \eqref{eq.rds}. In the proof, we will use the pathwise uniqueness proved in 
 Section \ref{s.pathwise}. Indeed, by pathwise 
uniqueness, solutions exist strongly, i.e.\ on a given probability space and with respect to a given cylindrical Wiener process. 
This allows us to adopt the strategy  of \cite{KvN12} to our situation and prove existence of solutions via approximation of the coefficients.

We will use the following estimates for deterministic and stochastic convolutions which are a consequence of the factorization technique 
\cite{dpkz}.

\begin{lem}\label{l.convest}
Assume (A), (F) and (G).
Let $(\Omega, \Sigma, \FF, \P)$ be a stochastic basis on which an $H$-cylindrical Wiener process is defined. Moreover, let 
$u = (u_1, \ldots, u_r)$ be a continuous, $\FF$-progressive $E$-valued process. Then for $p>2$
there exists a constant $C(T)$ with $C(T) \to 0$ as $T \to 0$, 
independent of the constants $c_1$ and $c_2$ from (F) and the sequences $\alpha_l$ and $\beta_l$ from (G1)
such that for the stochastic convolution process
\[
 S_l \diamond_l G_l(u_l) := t \mapsto \int_0^t S_l(t-s)G_l (u_l(s)\, dW_{H_l}(s)
\]
we have the estimate 
\[
\expect \| S_l \diamond_l G_l(u_l)\|_{C([0,T]; C(\overline{\OO}))}^p \leq 
C(T)(\|\alpha_l\|_{\ell^2}^p + \|\beta_l\|_{\ell^2}^p\expect \| u_l\|_{C([0,T]; C(\overline{\OO}))}^p) 
\]
and for the deterministic convolution process
\[
S_l\ast K_l (u) := t \mapsto \int_0^t S_l(t-s)K_l (u(s))\, ds
\]
we have the estimate
\[
\expect \| S_l \ast K_l(u_l)\|_{C([0,T]; C(\overline{\OO}))}^p \leq 
C(T)(c_1^p + c_2^p\expect \| u\|_{E}^p)  \, .
\]
\end{lem}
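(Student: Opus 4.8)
The plan is to prove both estimates by the factorization method of Da Prato--Kwapień--Zabczyk, exploiting that the stochastic convolution can be written as a composition of a fractional integral operator and an ordinary Bochner integral over a fractional domain space that embeds into $C(\overline{\OO})$ via Lemma \ref{l.fdinject}. Fix $p > 2$, choose $\alpha$ with $\frac{d}{2p} < \alpha < \frac12$, and recall the classical identity
\[
\int_s^t (t-\sigma)^{\alpha-1}(\sigma-s)^{-\alpha}\, d\sigma = \frac{\pi}{\sin \pi\alpha}.
\]
For the stochastic term, one writes, via the stochastic Fubini theorem,
\[
\big[S_l \diamond_l G_l(u_l)\big](t) = \frac{\sin \pi\alpha}{\pi} \int_0^t (t-\sigma)^{\alpha-1} S_l(t-\sigma) Y_l(\sigma)\, d\sigma, \qquad
Y_l(\sigma) := \int_0^\sigma (\sigma-s)^{-\alpha} S_l(\sigma-s) G_l(u_l(s))\, dW_{H_l}(s),
\]
and similarly for the deterministic convolution with $G_l$ replaced by $K_l$ and the stochastic integral by an ordinary integral. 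The point of the exponents is that $(t-\sigma)^{\alpha-1}\|S_l(t-\sigma)\|_{\cL(\mathsf{D}((-A_{l,p})^\alpha), C(\overline{\OO}))}$-type kernels are integrable after Lemma \ref{l.fdinject}, while the inner process $Y_l$ lives in $L^p(0,T; \mathsf{D}((-A_{l,p})^\alpha))$ with good bounds.

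The key steps, in order, are as follows. First, I would bound the inner stochastic integral: using the Burkholder--Davis--Gundy inequality in the UMD (indeed type 2) space $\mathsf{D}((-A_{l,p})^\alpha)$ together with the $\gamma$-norm estimate and linear growth of $G_l$ from Lemma \ref{l.propg} and the smoothing estimate $\|S_l(\tau)\|_{\cL(L^p(\OO),\mathsf{D}((-A_{l,p})^\alpha))} \lesssim \tau^{-\alpha}$ (as in the proof of Lemma \ref{l.stochint}), one gets
\[
\expect \|Y_l(\sigma)\|_{\mathsf{D}((-A_{l,p})^\alpha)}^p \lesssim \Big(\int_0^\sigma (\sigma-s)^{-4\alpha} ds\Big)^{p/2}\big(\|\alpha_l\|_{\ell^2}^p + \|\beta_l\|_{\ell^2}^p \expect\|u_l\|_{C([0,\sigma];C(\overline\OO))}^p\big),
\]
which is finite and bounded uniformly on $[0,T]$ since $4\alpha < 2$; the constant here is $O(T^{(1-2\alpha)p/2}) \to 0$. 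Hence $\expect\|Y_l\|_{L^p(0,T;\mathsf{D}((-A_{l,p})^\alpha))}^p \leq \tilde C(T)(\|\alpha_l\|_{\ell^2}^p + \|\beta_l\|_{\ell^2}^p\expect\|u_l\|^p)$ with $\tilde C(T)\to 0$. Second, I would apply Young's (or Hölder's) inequality in the factorization integral: since $\alpha > \frac{d}{2p}$, Lemma \ref{l.fdinject} gives $\|S_l(\tau)v\|_\infty \lesssim \tau^{-\alpha}\ldots$, more precisely the map $z \mapsto t\mapsto \int_0^t (t-\sigma)^{\alpha-1}S_l(t-\sigma)z(\sigma)d\sigma$ is bounded from $L^p(0,T;\mathsf{D}((-A_{l,p})^\alpha))$ into $C([0,T];C(\overline\OO))$ because $(t-\sigma)^{\alpha-1}$ composed with the embedding is in $L^{p'}$ locally (using $p>2$, hence $(\alpha-1)p' > -1$ for $\alpha$ close enough to $\frac12$; if needed shrink $\alpha$ toward $\frac12$ so that $(1-\alpha)p'<1$), with operator norm $O(T^{\delta})$ for some $\delta>0$. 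Combining the two steps and setting $C(T)$ to be the product of the two $T$-dependent constants gives the stochastic estimate. Third, the deterministic convolution estimate is proved the same way but more easily: the inner process is now $Z_l(\sigma) = \int_0^\sigma (\sigma-s)^{-\alpha}S_l(\sigma-s)K_l(u(s))ds$, and using $\|K_l(u(s))\|_p \leq |\OO|^{1/p}(c_1 + c_2\|u(s)\|_E)$ from (F) together with $\|S_l(\tau)\|_{\cL(L^p,\mathsf{D}((-A_{l,p})^\alpha))}\lesssim \tau^{-\alpha}$ and Young's inequality gives $\expect\|Z_l\|_{L^p(0,T;\mathsf{D}((-A_{l,p})^\alpha))}^p \lesssim T^{?}(c_1^p + c_2^p\expect\|u\|_E^p)$; then the same factorization bound finishes it. Throughout, one must check that all constants depend only on $p$, $\OO$, $\eta$, $M$, the $c_l$, and $T$ (not on $c_1,c_2,\alpha_l,\beta_l$), which is visible from the explicit form of the bounds.

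The main obstacle I expect is the bookkeeping of exponents: one needs a single $\alpha \in (\frac{d}{2p},\frac12)$ for which simultaneously (i) $4\alpha < 2$ so the inner BDG estimate converges, (ii) $\alpha > \frac{d}{2p}$ so Lemma \ref{l.fdinject} applies and $\mathsf{D}((-A_{l,p})^\alpha)\hookrightarrow C(\overline\OO)$, and (iii) $(1-\alpha)p' < 1$, i.e. $\alpha > 1 - \frac1{p'} = \frac1p$, so that $\sigma\mapsto\sigma^{\alpha-1}$ is in $L^{p'}_{\mathrm{loc}}$ and the outer factorization integral maps into continuous functions — this last requires $p > 2$ precisely (so that $\frac1p < \frac12$ leaves room above $\frac1p$ and below $\frac12$, also needing $\frac{d}{2p} < \frac12$, i.e. $p > d$, hence the hypothesis should really be $p > \max\{2,d\}$ or one argues the embedding differently). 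Reconciling these — and confirming that the resulting multiplicative constant genuinely tends to $0$ as $T\to 0$, which follows because every $T$-dependent factor is a positive power of $T$ coming from the finite-horizon integrals $\int_0^T \tau^{-4\alpha}d\tau$ and $\int_0^T \tau^{(\alpha-1)p'}d\tau$ — is the only delicate point; the rest is a routine application of BDG in type 2 spaces, the $\gamma$-radonifying estimates already recorded in Lemma \ref{l.propg} and the proof of Lemma \ref{l.stochint}, and Young's inequality.
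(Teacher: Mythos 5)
Your overall strategy---factorization, the embedding of Lemma \ref{l.fdinject}, the $\gamma$-norm and linear growth bounds from Lemma \ref{l.propg}---is the same as the paper's; the paper simply outsources the factorization step to \cite[Proposition 4.2]{vNVW08} (and \cite[Proposition 4.2.1]{lunardi} for the deterministic term) instead of redoing it by hand. But as written your exponent bookkeeping contains a genuine error, and it is exactly at the point you flagged as delicate. In your inner estimate you apply the smoothing $\|S_l(\tau)\|_{\cL(L^p(\OO),\mathsf{D}((-A_{l,p})^\alpha))}\lesssim\tau^{-\alpha}$ inside the stochastic integral, so the square function carries the kernel $(\sigma-s)^{-2\alpha}$ squared, i.e.\ $(\sigma-s)^{-4\alpha}$; this is integrable only if $4\alpha<1$, not under your stated condition $4\alpha<2$. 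With the correct condition $\alpha<\tfrac14$, your simultaneous requirements $\alpha>\tfrac1p$ (to make $(t-\sigma)^{\alpha-1}$ lie in $L^{p'}$ for the outer integral) and $\alpha>\tfrac{d}{2p}$ (for the embedding) force $p>\max\{4,2d\}$, so the argument does not prove the lemma for all $p>2$, which is what the statement asserts and what the later existence proofs use.

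The source of the trouble is that you tie the spatial integrability exponent to the moment exponent $p$. The paper decouples them: it fixes an auxiliary $q$ so large that $\tfrac{d}{2q}<\tfrac12-\tfrac1p$, chooses $\alpha\in(\tfrac{d}{2q},\tfrac12-\tfrac1p)$ and $\beta\in(\alpha+\tfrac1p,\tfrac12)$, keeps the inner (square-function) kernel at order $(t-s)^{-2\beta}$ with $\beta<\tfrac12$ (no smoothing inside), and lets the \emph{outer} factorization integral absorb the smoothing of order $\beta$ into $\mathsf{D}((-A_{l,q})^\alpha)$, the gap $\beta-\alpha>\tfrac1p$ providing continuity in time; this is precisely what \cite[Proposition 4.2]{vNVW08} packages. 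If you repair your argument this way---inner process valued in $L^q(\OO)$ with $q$ large and independent of $p$, smoothing of order $\alpha<\beta-\tfrac1p$ applied in the outer integral, then Lemma \ref{l.fdinject} with exponent $q$---the remaining steps you describe (type~2 embedding $L^2(0,t;\gamma(H_l,L^q))\hookrightarrow\gamma(L^2(0,t;H_l),L^q)$, linear growth, Young's inequality, and the observation that every $T$-dependent factor is a positive power of $T$, so $C(T)\to0$) do go through and reproduce the paper's proof; your proposed alternative of restricting to $p>\max\{2,d\}$ weakens the statement rather than proving it.
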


\begin{proof}
We first treat the stochastic convolution. We pick $q$ so large that $\frac{d}{2q} < \half - \frac{1}{p}$ and 
$\alpha \in (\frac{d}{2q}, \half - \frac{1}{p})$. 
Then, for some $\eps > 0$ and $\beta \in (\alpha + \frac{1}{p}, \half)$, we find
\begin{eqnarray*}
&&  \expect \| S_l \diamond_l G_l(u_l)\|_{C([0,T]; C(\overline{\OO}))}^p\\
& \lesssim & \expect \| S_l \diamond_l G_l(u_l)\|_{C([0,T]; \mathsf{D}((-A_{l,q})^\alpha)}^p\\
& \lesssim & T^{\eps p} \int_0^T \expect \| s \mapsto (t-s)^{-\beta} G_l (u_l(s))\|_{\gamma (L^2(0,t; H_l), L^q(\OO))}^p\, ds\\
& \lesssim & T^{\eps p} \int_0^T \expect \Big(\int_0^t(t-s)^{-2\beta} \|G_l (u_l)\|_{\gamma (H_l, L^q(\OO))}^2\, ds\Big)^{\frac{p}{2}}\, dt\\
& \leq & T^{\eps p} \int_0^T \expect \Big(\int_0^t(t-s)^{-2\beta} |\OO|^2(\|\alpha_l\|_{\ell^2} + \|\beta_l\|_{\ell^2} \|u_l\|_{C([0,T; C(\OO))})^2 ds\Big)^{\frac{p}{2}}\, dt\\
& \lesssim & T^{\eps\beta} \int_0^T \expect |\OO|^p (\|\alpha_l\|_{\ell^2} + \|\beta_l\|_{\ell^2} \|u_l\|_{C([0,T; C(\OO))})^p\, dt\\
& \lesssim & T^{\eps\beta} \Big( |\OO|^p (\|\alpha_l\|_{\ell^2}^p + \|\beta_l\|_{\ell^2}^p \expect \|u_l\|_{C([0,T]; C(\OO))}^p)\Big)\, .
\end{eqnarray*}
Here, the first estimate follows from the embedding of Lemma \ref{l.fdinject}, the second is \cite[Proposition 4.2]{vNVW08}, the third uses 
that $L^q(\OO)$ has type 2, whence we have the embedding $L^2(0,t; \gamma (H_l, L^q(\OO)) \inject \gamma (L^2(0,t;H_l), L^q(\OO))$. 
The fourth estimate 
follows from the linear growth of $G_l$, proved in Lemma \ref{l.propg}, the fifth from the fact that $\sup_{t \in [0,T]} \int_0^t(t-s)^{-2\beta}\, ds < \infty$ and the last one from Young's inequality. This proves the first assertion. 

The (easier) proof of the section assertion is similar, using \cite[Proposition 4.2.1]{lunardi} instead of \cite[Proposition 4.2]{vNVW08}.
\end{proof}

We first prove existence of solutions under additional boundedness assumptions on the maps $k_l$ and $g_{l,k}$.

\begin{thm}\label{t.ex1}
Assume (A), (F) and (G). Additionally assume that 
\begin{enumerate}
\item The functions $k_l$ from assumption $F$ are uniformly bounded, i.e.\ there exists a constant $C\geq 0$ such that 
$|k_l(x,s)| \leq C$ for all $(x,s) \in \overline{\OO}\times\CR^k$ and $l=1, \ldots, r$.
\item For $l=1, \ldots, r$, the vector $\beta_l$ from assumption (G1) satisfies $\beta_l \equiv 0$.
\end{enumerate}
Then every $\xi = (\xi_1, \ldots, \xi_r) \in E$, there exists a unique solution 
of equation \eqref{eq.rds} with initial datum $\xi$.
\end{thm}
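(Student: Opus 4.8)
The plan is to reduce, via the pathwise uniqueness already established, to the construction of one \emph{weak} solution, and then to remove the super\-linear growth of the reaction by a truncation argument combined with an a priori bound built from the dissipativity estimate of Lemma~\ref{l.fdiss} and the convolution estimates of Lemma~\ref{l.convest}. Concretely: by Theorem~\ref{t.uniqueness}, pathwise uniqueness holds for \eqref{eq.rds}, so by the Yamada--Watanabe theorem in Banach spaces \cite[Section~5]{Kmartingale} it is enough to produce, for a prescribed stochastic basis, $H$\-cylindrical Wiener process and deterministic $\xi\in E$, a weak solution with $u(0)=\xi$. I would truncate the dissipative part only: with $\pi_n\colon\R\to[-n,n]$, $\pi_n(s):=(s\wedge n)\vee(-n)$, set $h_l^{(n)}(x,s):=h_l(x,\pi_n(s))$ and $f_l^{(n)}:=h_l^{(n)}+k_l$. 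Then $H_l^{(n)}\colon C(\overline{\OO})\to C(\overline{\OO})$ is bounded and globally Lipschitz, $F^{(n)}\colon E\to E$ is bounded, locally Lipschitz and of linear growth; and since $\pi_n$ is the identity on $[-n,n]$ and moves every other point towards $0$, $h_l^{(n)}$ still satisfies (F1) with the \emph{same} constant $a_l$, so the conclusion of Lemma~\ref{l.fdiss}(1) holds for $H_l^{(n)}$ with a constant $a'$ independent of~$n$. The diffusion $G$ is unchanged; by $\beta_l\equiv0$ it satisfies $\|G_l(u)\|_{\gamma(\ell^2,L^p(\OO))}\le|\OO|\,\|\alpha_l\|_{\ell^2}$ for all $u$, and $k_l$ is bounded by the constant $C$ from hypothesis (1).

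Next, for each fixed $n$ I would solve $dU=[AU+F^{(n)}(U)]\,dt+G(U)\,dW_H$, $U(0)=\xi$, whose coefficients are now continuous and of linear growth. A martingale (weak) solution exists by the general theory (e.g.\ \cite{Kmartingale}; for $r=1$ cf.\ \cite{bg99}). Since the $f_l^{(n)}$ are locally Lipschitz and the $g_{l,k}$ satisfy (G2), the remark following Theorem~\ref{t.uniqueness} gives pathwise uniqueness for this equation, and \cite[Section~5]{Kmartingale} then upgrades the martingale solution to a unique strong solution $U^{(n)}=(U_1^{(n)},\dots,U_r^{(n)})$ on the prescribed basis with $U^{(n)}(0)=\xi$.

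Then comes the a priori bound uniform in $n$ and the removal of the truncation. Put $z^{(n)}_l:=S_l\ast K_l(U^{(n)})+S_l\diamond_l G_l(U^{(n)}_l)$ and $v^{(n)}_l:=U^{(n)}_l-z^{(n)}_l$. Since $K_l$ is bounded by $C$ and $G_l$ is bounded, Lemma~\ref{l.convest} gives, for every $q>2$, $\E\sup_{[0,T]}\|z^{(n)}_l\|_\infty^q\le C_1(T,q)$ with $C_1$ independent of $n$. The path $v^{(n)}_l$ is the mild solution of $\dot v=A_l v+H_l^{(n)}(v+z^{(n)}_l)$, $v(0)=\xi_l$; testing with an element of $\partial\|v^{(n)}_l(t)\|_\infty$, using the dissipativity of $A_l$ on $C(\overline{\OO})$ and Lemma~\ref{l.fdiss}(1) with its $n$\-independent constant (rigorous via the resolvent regularisation already used in the proof of Theorem~\ref{t.uniqueness}, cf.\ \cite{dpz92,KvN12}), yields $\|v^{(n)}_l(t)\|_\infty\le\|\xi_l\|_\infty+a'\int_0^t(1+\|z^{(n)}_l(s)\|_\infty)^{2N_l+1}\,ds$, hence $\E\sup_{[0,T]}\|U^{(n)}\|_E^q\le C_2(T,q,\xi)$, uniformly in $n$, for $q$ large. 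Now set $\tau_n:=\inf\{t:\|U^{(n)}(t)\|_E\ge n\}$. On $[0,\tau_n]$ one has $f_l^{(n)}=f_l$ (and $G$ unchanged), so $U^{(n)}$ restricted to $[0,\tau_n]$ solves \eqref{eq.rds}; the localised form of pathwise uniqueness — the proof of Theorem~\ref{t.uniqueness} proceeds through exactly such stopping times — forces $U^{(m)}=U^{(n)}$ on $[0,\tau_n]$ whenever $m\ge n$, so $(\tau_n)$ is nondecreasing and the $U^{(n)}$ are consistent. Gluing them produces a continuous, $\FF$\-progressive, $E$\-valued process $U$ on $[0,\tau_\infty)$, $\tau_\infty:=\lim_n\tau_n$, that solves \eqref{eq.rds}, and since $\P(\tau_n\le T)\le n^{-q}C_2(T,q,\xi)\to0$ we get $\P(\tau_\infty\le T)=0$ for every $T$, i.e.\ $\tau_\infty=\infty$ a.s. Thus $U$ is a global solution with $U(0)=\xi$, and uniqueness is Theorem~\ref{t.uniqueness}.

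The hard part will be the weak\-existence step for the bounded\-coefficient equation: since $G$ is merely $\tfrac12$\-Hölder continuous, a contraction argument is unavailable, and one must instead prove tightness of the laws of suitable approximations in a path space and identify the limit as a martingale solution, the compactness being furnished by the smoothing of the analytic semigroups $S_l$ through Lemmas~\ref{l.fdinject} and~\ref{l.convest}. The second delicate point is keeping the a priori estimate uniform in $n$; this is exactly where hypotheses (1) and (2) are used, since $k_l$ bounded and $\beta_l\equiv0$ make the convolution estimates of Lemma~\ref{l.convest} independent of $U^{(n)}$, so that they can be combined with the super\-linear dissipativity of $H^{(n)}$.
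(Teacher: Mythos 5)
Your proposal is correct and takes essentially the same route as the paper: truncate only the dissipative part $h_l$ (so that (F1) holds with $n$-independent constants), obtain solutions of the bounded-coefficient equation from \cite[Theorem 4.5]{bg99} upgraded to strong solutions via pathwise uniqueness and the Yamada--Watanabe theorem, glue them along stopping times, and remove the truncation through a uniform $L^p$-bound coming from Lemma~\ref{l.convest} (exactly where hypotheses (1)--(2) enter) combined with the dissipativity estimate of Lemma~\ref{l.fdiss}. The only cosmetic differences are that the paper packages your subdifferential estimate as the ready-made Lemma~\ref{l.est1} and concludes globality by citing \cite[Corollary 2.6]{KvN12} rather than your Chebyshev estimate on the stopping times.
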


In the proof of Theorem \ref{t.ex1}, we use the following Lemma which is a reformulation of \cite[Lemma 4.2]{bg99}, see also \cite[Lemma 4.4]{KvN12}.

\begin{lem}\label{l.est1}
Assume that $\mathbf{T} = (T(t))_{t\geq 0}$ is a strongly continuous contraction semigroup on $C(\overline{\OO})$ and that 
$H : C(\overline{\OO}) \to C(\overline{\OO})$ is such that for some constants $a>0$ and $N \in \CN$ we have $\dual{H(u+v)}{\varphi} \leq a(1+\|v\|_\infty)^{2N+1}$ for all $u, v \in C(\overline{\OO})$ and $\varphi \in \partial \|u\|_\infty$.
If for some $\tau>0$, $u_0 \in C(\overline{\OO})$ and two continuous functions $u,v : [0,\tau) \to C(\overline{\OO})$ we have 
\[
u(t) = T(t)u_0  + \int_0^t T(t-s)F(u(s) + v(s))\, ds \quad \forall\, t \in [0,\tau),
\]
Then
\[
\|u(t)\|_\infty \leq \|u_0\|_\infty + \int_0^t a(1+\|v(s)\|_\infty)^{2N+1}\, ds \quad \forall\, t \in [0,\tau)\, .
\]
\end{lem}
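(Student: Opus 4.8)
The plan is to reduce the problem to a strongly differentiable one via the Yosida approximation, differentiate the $C(\overline{\OO})$-norm of the regularized solution, and then pass to the limit. (The displayed integral equation in the statement should read $u(t)=T(t)u_0+\int_0^t T(t-s)H(u(s)+v(s))\,ds$, i.e.\ with $H$ rather than $F$; I argue with $H$.) Let $B$ be the generator of $\mathbf{T}$. Since $\mathbf{T}$ is a $C_0$ contraction semigroup, $B$ is dissipative ($\langle Bw,\varphi\rangle\le 0$ for $w\in D(B)$ and $\varphi\in\partial\|w\|_\infty$), and $\|J_\lambda\|\le 1$ with $J_\lambda w\to w$ as $\lambda\to\infty$, where $J_\lambda:=\lambda R(\lambda,B)$. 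Put $g(s):=H(u(s)+v(s))$, a continuous $C(\overline{\OO})$-valued function on $[0,\tau)$. Applying $J_\lambda$ to the mild-solution identity (it commutes with $\mathbf{T}$ and with the Bochner integral), the function $u_\lambda:=J_\lambda u$ satisfies $u_\lambda(t)=T(t)J_\lambda u_0+\int_0^t T(t-s)J_\lambda g(s)\,ds$; since $J_\lambda u_0\in D(B)$ and $s\mapsto J_\lambda g(s)$ takes values in $D(B)$ with $BJ_\lambda g=\lambda(J_\lambda-I)g$ continuous, the standard regularity theory for convolutions against a $C_0$-semigroup gives $u_\lambda\in C^1([0,\tau);C(\overline{\OO}))$, $u_\lambda(t)\in D(B)$, and $u_\lambda'(t)=Bu_\lambda(t)+J_\lambda g(t)$.

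Next I differentiate the norm along the $C^1$-curve $u_\lambda$, using the classical formula $\tfrac{d^+}{dt}\|u_\lambda(t)\|_\infty=\max_{\varphi\in\partial\|u_\lambda(t)\|_\infty}\langle u_\lambda'(t),\varphi\rangle$ (valid for all $t$, with the convention that $\partial\|0\|_\infty$ is the unit sphere). Combining this with the equation for $u_\lambda'$ and the dissipativity of $B$ gives
\[
\tfrac{d^+}{dt}\|u_\lambda(t)\|_\infty\le\max_{\varphi\in\partial\|u_\lambda(t)\|_\infty}\langle J_\lambda g(t),\varphi\rangle=:\beta_\lambda(t)\le\|g(t)\|_\infty .
\]
Since $\|u_\lambda(\cdot)\|_\infty$ is Lipschitz, hence absolutely continuous, and $\beta_\lambda$ is bounded on $[0,t]$ by $\sup_{[0,t]}\|g\|_\infty$, integrating yields for every $\lambda$ the estimate $\|u_\lambda(t)\|_\infty\le\|J_\lambda u_0\|_\infty+\int_0^t\beta_\lambda(s)\,ds\le\|u_0\|_\infty+\int_0^t\beta_\lambda(s)\,ds$.

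The heart of the matter is to show $\limsup_{\lambda\to\infty}\beta_\lambda(s)\le a(1+\|v(s)\|_\infty)^{2N+1}=:h(s)$ for each fixed $s$. If $u(s)=0$ this is immediate: the hypothesis with ``$u$''$=0$ and ``$v$''$=v(s)$ gives $\|H(v(s))\|_\infty\le h(s)$, hence $\beta_\lambda(s)\le\|g(s)\|_\infty=\|H(v(s))\|_\infty\le h(s)$. If $u(s)\neq0$, then $u_\lambda(s)=J_\lambda u(s)\neq0$ for large $\lambda$, so $\partial\|u_\lambda(s)\|_\infty$ is a nonempty weak-$*$ compact subset of the unit ball of $\mathscr{M}(\overline{\OO})$ and $\varphi\mapsto\langle J_\lambda g(s),\varphi\rangle$ attains a maximum there at some $\varphi_\lambda$; then $\beta_\lambda(s)=\langle J_\lambda g(s),\varphi_\lambda\rangle=\langle g(s),\psi_\lambda\rangle$ with $\psi_\lambda:=J_\lambda^*\varphi_\lambda$, $\|\psi_\lambda\|\le1$, and $\langle u(s),\psi_\lambda\rangle=\langle J_\lambda u(s),\varphi_\lambda\rangle=\|u_\lambda(s)\|_\infty\to\|u(s)\|_\infty$. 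Extracting a subsequence with $\psi_\lambda$ converging weak-$*$ to some $\psi$ gives $\|\psi\|\le1$ and $\langle u(s),\psi\rangle=\|u(s)\|_\infty$, so $\psi\in\partial\|u(s)\|_\infty$, and hence $\langle g(s),\psi_\lambda\rangle\to\langle H(u(s)+v(s)),\psi\rangle\le h(s)$ by the hypothesis; the routine ``every subsequence has a further subsequence'' argument promotes this to $\limsup_\lambda\beta_\lambda(s)\le h(s)$. Since the $\beta_\lambda$ are uniformly bounded on $[0,t]$, the reverse Fatou lemma yields $\limsup_\lambda\int_0^t\beta_\lambda(s)\,ds\le\int_0^t h(s)\,ds$, and since $u_\lambda(t)=J_\lambda u(t)\to u(t)$ in $C(\overline{\OO})$, letting $\lambda\to\infty$ in the estimate above gives $\|u(t)\|_\infty\le\|u_0\|_\infty+\int_0^t a(1+\|v(s)\|_\infty)^{2N+1}\,ds$.

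The step I expect to be the main obstacle is this last passage to the limit: the subdifferential bound on $H$ is imposed at the exact point $u(s)$, whereas after Yosida regularization the norm must be differentiated along $u_\lambda(s)=J_\lambda u(s)$, so one has to verify that the maximizers $\psi_\lambda=J_\lambda^*\varphi_\lambda$ cluster weak-$*$ at genuine elements of $\partial\|u(s)\|_\infty$ — this is what makes the hypothesis usable in the limit, and is where the weak-$*$ compactness of the ball of $\mathscr{M}(\overline{\OO})$ is essential. The remaining ingredients (regularity of $u_\lambda$, dissipativity of $B$, the one-sided derivative formula for the norm, and $J_\lambda w\to w$) are standard.
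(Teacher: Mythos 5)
Your proof is correct in substance, and there is nothing in the paper to measure it against line by line: the paper does not prove Lemma \ref{l.est1} at all, but presents it as a reformulation of \cite[Lemma 4.2]{bg99} (see also \cite[Lemma 4.4]{KvN12}). Your argument is of the same general type as the proofs in those references: you correctly spotted the typo ($F$ should be $H$), regularized via $u_\lambda=\lambda R(\lambda,B)u$ to obtain a classical solution, used that the generator of a contraction semigroup is dissipative with respect to \emph{every} element of the duality set, invoked the one-sided derivative formula for the norm along a $C^1$ curve, and then handled the genuinely delicate step, namely transporting the subdifferential hypothesis from $u_\lambda(s)$ back to $u(s)$ by the weak-$*$ compactness argument; since $C(\overline{\OO})$ is separable, the unit ball of $\mathscr{M}(\overline{\OO})$ is weak-$*$ sequentially compact, so your subsequence extraction and the identification of the cluster point as an element of $\partial\|u(s)\|_\infty$ are legitimate, and the case $u(s)=0$ is treated correctly as well. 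The only point you leave implicit is the measurability of $s\mapsto\beta_\lambda(s)$, which you need both to integrate the differential inequality and to apply the reverse Fatou lemma; this is harmless, because by convexity of the norm $\beta_\lambda(s)=\inf_{h>0}h^{-1}\big(\|u_\lambda(s)+hJ_\lambda g(s)\|_\infty-\|u_\lambda(s)\|_\infty\big)$ is an infimum of continuous functions of $s$, hence upper semicontinuous and Borel measurable, and it is bounded above on $[0,t]$ by $\sup_{s\in[0,t]}\|g(s)\|_\infty<\infty$. With that remark added, your argument is a complete, self-contained proof of the lemma.
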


\begin{proof}[Proof of Theorem \ref{t.ex1}]
Throughout, we fix a positive time $T>0$. It suffices to prove existence of solutions defined for times $t \in [0,T]$. Indeed, by the results of 
Section \ref{s.pathwise} and the Yamada-Watanabe theorem, any solution exist strongly and any two solutions agree as long as they are both defined.
Thus, solutions defined on bounded time intervals can be glued together to get a solution defined for all times.

We now proceed in two steps.
\medskip 

{\it Step 1} Construction of a maximal solution for equation \eqref{eq.rds}.

To begin with, note that by our additional assumptions (1) and (2), the operators $K_l$ and $G_l$ are uniformly bounded, i.e.\ there exists a 
constant $M \geq 0$ such that 
\[
\| H_l (u_1, \ldots, u_r)\|_{C(\overline{\OO})} \leq M 
\quad\mbox{and}\quad 
\|G_l (u)\|_{\gamma (\ell^2, L^p(\OO))}  \leq M
\]
for all $u_1, \ldots, u_r \in C(\overline{\OO})$ resp.\ $u \in C(\overline{\OO})$. We now also approximate the functions $H_l$ with uniformly 
bounded functions. To that end, we set
\[
h_l^{(n)} (x,s) := \left\{ \begin{array}{ll}
h_l (x,s), & \mbox{if} |s| \leq n \\
h_l(x, n \cdot \sgn s) & \mbox{otherwise}.
\end{array}\right.
\]
for $n \in \CN$ and $l=1, \ldots, r$. We define $H_l^{(n)} : C(\overline{\OO}) \to C(\overline{\OO})$ by $\big[K_l^{(n)}(u)\big](x) := 
h_l^{(n)}(x, u(x))$ for $u \in C(\overline{\OO})$ and set $F_l^{(n)}:= K_l + H_l^{(n)}$. Then, for fixed $n\in \CN$, the map $F^{(n)}:= (F_1^{(n)}, \ldots, F_r^{(n)}):
C(\overline{\OO})^r \to C(\overline{\OO})^r$ is uniformly bounded.

It follows from \cite[Theorem 4.5]{bg99} that there exists a solution $u_n = (u_1^{(n)}, \ldots, u_r^{(n)})$ of the stochastic equation with 
coefficients $A, F^{(n)}$ and $G$ and initial datum $\xi$. 
By pathwise uniqueness these solutions exist strongly. Thus we may assume that the processes $u_n$ 
are defined on a common stochastic basis $(\Omega, \Sigma, \FF, \P)$ and that they are solutions with respect to a common 
$H$-Wiener process $W_H$. Define
\[
\rho_n := \inf \Big\{ t \in [0,T]\,:\, \sum_{l=1}^n \|u_l^{(n)}(t)\|_\infty > n \Big\}
\]
with the convention that $\inf\emptyset = T$.

Since $F^{(n)}(u) = F^{(n+1)}(u)$ for $u \in E$ with $\|u\|_E\leq n$, we can repeat the arguments 
in the proof of Theorem \ref{t.uniqueness} to prove that $u_n(t) = u_{n+1}(t)$ for all $0 \leq t \leq \rho_n \wedge \rho_{n+1}$. This implies in 
particular that $\rho_n \leq \rho_{n+1}$ almost surely. Thus, putting $\rho := \sup_n \rho_n$, we may define a maximal solution of our 
original problem \eqref{eq.rds} by $(u(t))_{t \in [0,\rho)}$, where
\[
u(t) := u_n (t) \quad \mbox{for}\,\, t \in [0, \rho_n]\, .
\]

{\it Step 2} We show that the maximal solution $(u(t))_{t \in [0, \rho)}$ is globally defined.

We prove that for $p>2$ we have 
\begin{equation}\label{eq.lpbound}
 \sup_{n \in \CN}\| u_n\|_{L^p(\Omega; C([0,T]; C(\overline{\OO})^r)} < \infty.
 \end{equation}
By \cite[Corollary 2.6]{KvN12} this will then imply that $\rho \equiv T$ and that $u$ is an element of $L^p(\Omega; C([0,T]; C(\overline{\OO})^r))$. It now easily follows
that $u$ is a solution of our problem \eqref{eq.rds} on the interval $[0,T]$.

To prove \eqref{eq.lpbound}, we define the stochastic processes $v^{(n)}_l$ and $w^{(n)}_l$ for $n \in \CN$ and $l=1, \ldots r$ by
\[
v^{(n)}_l := S_l \ast K_l(u_n) \quad \mbox{and}\quad w^{(n)}_l (t) := S_l\diamond_l G_l(u_l)\, .
\]
Since $H_l$ and $G_l$ are uniformly bounded it follows from Lemma \ref{l.convest} that the processes $v^{(n)}_l$ and $w^{(n)}_l$
are uniformly bounded in $L^p(\Omega; C([0,T]; C(\overline{\OO}))$, say by $C_p$.

Now put $y_l^{(n)} := u_l^{(n)} - v_l^{(n)} - w_l^{(n)}$. Since $u_n$ is a mild solution of the approximate problem, it follows that 
\[
y_l^{(n)}(t) = S_l (t)\xi_l + \int_0^tS_l (t-s) H_l^{(n)}(y_l^{(n)}(s))\, ds + v_l^{(n)}(t) +w_l^{(n)}(t)
\]
almost surely for all $t \in [0,T]$. Note that $H_l^{(n)}$ satisfies assumption (F1) with the same constants as $h_l$. Consequently, 
Lemma \ref{l.fdiss} and Lemma \ref{l.est1} yield for $p>2$ that
\begin{eqnarray*}
&& \expect \|y_l^{(n)}\|_{C([0,T]; C(\overline{\OO}))}^p\\
 & \leq & \expect \sup_{t \in [0,T]} \Big( \|v_l\|_\infty  + \int_0^t a'(1+ \|v_l^{(n)}(s) + w_l^{(n)}(s)\|_\infty)^{2N_l+1}\, ds \Big)^p\\
& \lesssim & \expect  \big(1 + \|v_l\|^p + \|v_l^{(n)}\|_{C([0,T|; C(\overline{\OO}))}^{(2N_l+1)p} +  \|w_l^{(n)}\|_{C([0,T|; C(\overline{\OO}))}^{(2N_{l+1})p}\big)\\
& \leq & 1+ 2C_{(2N_l+1)p} =: M_l
\end{eqnarray*}
It thus follows that 
\begin{eqnarray*}
&& \expect \|u_l^{(n)}\|_{C([0,T]; C(\overline{\OO}))}^p\\
& \lesssim & \expect \|y_l^{(n)}\|_{C([0,T]; C(\overline{\OO}))}^p + \expect \|v_l^{(n)}\|_{C([0,T]; C(\overline{\OO}))}^p + \expect \|w_l^{(n)}\|_{C([0,T]; C(\overline{\OO}))}^p\\
& \leq  & M_l + 2C_p\, .
\end{eqnarray*}
Summing over $l=1, \ldots, r$, \eqref{eq.lpbound} is proved.
\end{proof}

Note that assumption (F2) was not used in the proof of Theorem \ref{t.ex1}. That assumption is used to get rid of the additional boundedness assumption 
on the $G_l$ and $K_l$ in Theorem \ref{t.ex1}. The main tool we use is the following Lemma, which can be found in \cite[Lemma 4.8]{KvN12}.

\begin{lem}\label{l.est2}
Assume that $\mathbf{T} = (T(t))_{t\geq 0}$ is a strongly continuous contraction semigroup on $C(\overline{\OO})$ and that 
$H : C(\overline{\OO}) \to C(\overline{\OO})$ is such that for some constants $a, b >0$ and $N \in \CN$ we have $\dual{H(u+v)- H(v)}{\varphi} \leq 
a(1+\|v\|_\infty )^{2N+1} - b\|u\|_\infty^{2N+1}$ for all $u, v \in C(\overline{\OO})$ and $\varphi \in \partial \|u\|_\infty$.
If $u,v  \in C([0,T];C(\overline{\OO}))$ satisfy
\[
u(t) = \int_0^t T(t-s)H(u(s) + v(s))\, ds \quad \forall\, t \in [0,T],
\]
then
\[
\|u\|_{C([0,T]; C(\overline{\OO})} \leq \big(\frac{4a}{b}\big)^{\frac{1}{2N+1}}(1 + \|v\|_{C([0,T]; C(\overline{\OO}))})\, .
\]
\end{lem}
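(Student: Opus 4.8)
The plan is to reduce the estimate to a scalar comparison principle for $m(t):=\|u(t)\|_\infty$, using that $\mathbf{T}$ is a \emph{contraction} semigroup in order to discard its contribution along the way, and the superlinear dissipativity of $H$ to make the resulting differential inequality mean-reverting.

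The first step is the \emph{subtangent inequality} for mild solutions. Since $u(t)=\int_0^t T(t-s)H(u(s)+v(s))\,ds$ and the integrand is continuous, for $h>0$ one has $u(t+h)=T(h)u(t)+h\,w_t+o(h)$ in $C(\overline{\OO})$, where $w_t:=H(u(t)+v(t))$. Writing $T(h)u(t)+h\,w_t=T(h)(u(t)+h\,w_t)+h\,(w_t-T(h)w_t)$ and using $\|T(h)\|\le1$ together with the strong continuity of $\mathbf{T}$, one gets $\|u(t+h)\|_\infty\le\|u(t)+h\,w_t\|_\infty+o(h)$, hence
\[
D^+m(t)\ \le\ \sup_{\varphi\in\partial\|u(t)\|_\infty}\dual{H(u(t)+v(t))}{\varphi}.
\]
(Alternatively one may first regularise by the resolvent $\lambda R(\lambda,A)$ of the generator $A$ of $\mathbf{T}$: then $u_\lambda:=\lambda R(\lambda,A)u$ solves a genuine $C^1$-equation, one differentiates $\|u_\lambda(t)\|_\infty$, discards the generator term by dissipativity, and lets $\lambda\to\infty$.)

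The second step is to insert the hypothesis on $H$. Put $R:=\|v\|_{C([0,T];C(\overline{\OO}))}$. Taking the roles of $u$ and $v$ in the assumed estimate to be $u(t)$ and $v(t)$ and, where needed, bounding the leftover term $\dual{H(v(t))}{\varphi}\le\|H(v(t))\|_\infty$ by the growth of $H$, the right-hand side above is $\le a(1+R)^{2N+1}-b\,m(t)^{2N+1}$. Thus $m$ is a continuous function with $m(0)=0$ satisfying, in the Dini sense,
\[
D^+m(t)\ \le\ a(1+R)^{2N+1}-b\,m(t)^{2N+1},\qquad t\in[0,T].
\]
A standard argument now finishes the proof: the right-hand side is strictly negative whenever $m(t)>(a/b)^{1/(2N+1)}(1+R)$, and a continuous function starting at $0$ whose upper right derivative is negative above a fixed level can never rise above it, so $\|u\|_{C([0,T];C(\overline{\OO}))}\le(a/b)^{1/(2N+1)}(1+R)\le(4a/b)^{1/(2N+1)}(1+R)$, the factor $4$ comfortably absorbing the crude handling of the $H(v)$ term.

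The step I expect to be delicate is the first one: justifying the subtangent inequality for a merely mild solution, i.e.\ ensuring that the linear part is genuinely eliminated by the contractivity of $\mathbf{T}$ without leaving an error of the same $(2N+1)$-homogeneous size as the nonlinear term. This is exactly where the contraction hypothesis on $\mathbf{T}$ (or, in the resolvent version, the contractivity of $\lambda R(\lambda,A)$) is indispensable; it also explains why the lemma is phrased for contraction semigroups, and the argument reproduces \cite[Lemma 4.8]{KvN12}.
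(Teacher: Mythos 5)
Your Step 1 (the subtangent inequality $D^+\|u(t)\|_\infty \le \max_{\varphi\in\partial\|u(t)\|_\infty}\dual{H(u(t)+v(t))}{\varphi}$ for mild solutions, obtained from contractivity of $\mathbf{T}$ and strong continuity) is the standard device and is set up correctly; contrary to your expectation, it is not the delicate point. The genuine gap is in Step 2: you dispose of the leftover term $\dual{H(v(t))}{\varphi}$ by bounding it by $\|H(v(t))\|_\infty$ ``by the growth of $H$'', but the lemma as stated contains no growth assumption on $H$ at all --- the only hypothesis is the inequality for the \emph{difference} $H(u+v)-H(v)$, and that inequality is unchanged if $H$ is replaced by $H-M$ for an arbitrary constant $M>0$. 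Hence $\|H(v(t))\|_\infty$ admits no bound in terms of $a,b,N$ and $\|v\|$, and the claim that ``the factor $4$ comfortably absorbs'' this term is quantitatively unfounded. The defect is not cosmetic: with $T(t)=\mathrm{Id}$, $v\equiv 0$ and $H(w)=-w^{2N+1}-M$ (a composition operator, evaluated here on spatially constant functions), the difference hypothesis holds with $a,b$ depending only on $N$, yet $u(t)=\int_0^t H(u(s))\,ds$ is the constant-in-space solution of $\alpha'=-\alpha^{2N+1}-M$, $\alpha(0)=0$, whose sup norm tends to $M^{1/(2N+1)}$ and so exceeds $(4a/b)^{1/(2N+1)}$ for large $M$. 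So no argument can close your Step 2 from the stated hypothesis alone; some additional information on $H$ must be invoked.

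For comparison: the paper offers no proof of this lemma, quoting it from \cite[Lemma 4.8]{KvN12}, and in the applications the missing ingredient is exactly the companion estimate of Lemma \ref{l.fdiss}(1), namely $\dual{H(u+v)}{\varphi}\le a'(1+\|v\|_\infty)^{2N+1}$ for all $\varphi\in\partial\|u\|_\infty$ (which the composition operators $H_l$ satisfy, and which one should regard as part of the hypotheses of the cited lemma). Applied with $u=0$, where $\partial\|0\|_\infty$ is the whole closed dual unit ball, this estimate yields precisely the bound you need, $\|H(v)\|_\infty\le a'(1+\|v\|_\infty)^{2N+1}$. With that extra input your differential inequality becomes $D^+m(t)\le (a+a')(1+R)^{2N+1}-b\,m(t)^{2N+1}$ and your comparison argument goes through, even giving the constant $\bigl(2a/b\bigr)^{1/(2N+1)}$ when $a'=a$; without it, the leftover term --- the step you treated as a crude but harmless loss --- is where the proof fails.
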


\begin{thm}\label{t.ex2}
Assume (A), (F) and (G). Let $(\Omega, \Sigma, \FF, \P)$ be a stochastic basis on which an $H$-cylindrical Wiener process $W_H$ with respect 
to $\FF$ is defined.
Then for every initial datum $\xi = (\xi_1, \ldots, \xi_r) \in L^0(\Omega, \cF_0, \P; C(\overline{\OO})^r)$ there exists a unique solution 
$u= (u_1, \ldots, u_r)$ of equation \eqref{eq.rds}.
\end{thm}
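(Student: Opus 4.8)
The plan is to follow the approximation strategy of \cite{KvN12}: truncate the linear-growth parts of the coefficients so as to land in the situation of Theorem \ref{t.ex1}, solve the truncated problems, glue the solutions along exit times into a maximal local solution by means of pathwise uniqueness (Theorem \ref{t.uniqueness}), and then establish an a priori $L^p$-bound that forces the maximal solution to be globally defined. By the closing remarks of Section \ref{ss.p} (see \cite{Kmartingale}) it suffices to treat a \emph{deterministic} initial datum $\xi\in E$, and, since solutions on bounded intervals can be concatenated using pathwise uniqueness, it is enough to produce a solution on a fixed interval $[0,T]$. For $n\in\CN$ I would set $k_l^{(n)}(x,s):=k_l\big(x,(-n)\vee s_1\wedge n,\dots,(-n)\vee s_r\wedge n\big)$ and $g_{l,k}^{(n)}(x,s):=g_{l,k}\big(x,(-n)\vee s\wedge n\big)$. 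The $k_l^{(n)}$ are then bounded, the $g_{l,k}^{(n)}$ satisfy (G1) with $\alpha_k^{l,(n)}:=\alpha_k^l+n\beta_k^l\in\ell^2$ and vanishing linear-growth coefficient, (G2) is inherited, and the dissipative parts $h_l$ — which Theorem \ref{t.ex1} already allows to be of polynomial growth — are left untouched, so (F1) and (F2) persist.

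Theorem \ref{t.ex1} thus yields, for each $n$, a unique solution $u^{(n)}$ of \eqref{eq.rds} with coefficients $A$, $F^{(n)}:=(H_l+K_l^{(n)})_l$, $G^{(n)}$ and initial datum $\xi$. Pathwise uniqueness for these coefficients (valid by the Remark following Theorem \ref{t.uniqueness}) together with the Yamada--Watanabe theorem in the Banach-space form of \cite{Kmartingale} allows all the $u^{(n)}$ to be realized on the given stochastic basis with the given $W_H$. Put $\rho_n:=\inf\{t:\|u^{(n)}(t)\|_E>n\}\wedge T$. On $[0,\rho_n]$ the truncations are inactive, so there $u^{(n)}$ solves \eqref{eq.rds}; moreover $F^{(n)}=F^{(m)}$ and $G^{(n)}=G^{(m)}$ whenever $\|u\|_E\le n\wedge m$, so rerunning the computation of Theorem \ref{t.uniqueness} gives $u^{(n)}=u^{(m)}$ on $[0,\rho_n\wedge\rho_m]$; hence $\rho_n\uparrow\rho:=\sup_n\rho_n$ and $u(t):=u^{(n)}(t)$ for $t\in[0,\rho_n]$ is a well-defined local solution on $[0,\rho)$.

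The step I expect to be the main obstacle is the a priori estimate showing $\P(\rho<T)=0$, equivalently $\sup_n\expect\|u(\cdot\wedge\rho_n)\|_{C([0,T];E)}^p<\infty$ for some $p>2$. Fixing such a $p$ and a subinterval $[a,a+T_0]\subseteq[0,T]$, I would use for $t\le\rho_n$ the mild formula of $u=u^{(n)}$ from time $a$ and the decomposition $u_l=S_l(\cdot-a)u_l(a)+v_l+w_l+z_l$, where $v_l$ and $w_l$ are, respectively, the deterministic and stochastic convolutions over $[a,\cdot]$ of $K_l\big(u(\cdot\wedge\rho_n)\big)$ and $G_l\big(u_l(\cdot\wedge\rho_n)\big)$, and $z_l$ solves $z_l(t)=\int_a^tS_l(t-s)H_l\big(z_l(s)+S_l(s-a)u_l(a)+v_l(s)+w_l(s)\big)\,ds$. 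Since $H_l$ is \emph{not} truncated, Lemma \ref{l.est2} — this is precisely where assumption (F2) enters — bounds $\|z_l\|_{C([a,a+T_0];C(\overline{\OO}))}$ by a constant (depending only on the dissipativity constants) times $1+\|u_l(a)\|_\infty+\|v_l\|_{C([a,a+T_0];C(\overline{\OO}))}+\|w_l\|_{C([a,a+T_0];C(\overline{\OO}))}$, while Lemma \ref{l.convest}, applied to the stopped integrands — whose norms still obey the linear-growth bounds of (F) and (G1) with the \emph{original} constants, not the $n$-dependent ones — gives $\expect\|v_l\|_{C([a,a+T_0];C(\overline{\OO}))}^p+\expect\|w_l\|_{C([a,a+T_0];C(\overline{\OO}))}^p\le C(T_0)\big(1+\expect\|u(\cdot\wedge\rho_n)\|_{C([a,a+T_0];E)}^p\big)$ with $C(T_0)\to0$ as $T_0\to0$. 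Combining these and summing over $l$ yields
\[
\expect\|u(\cdot\wedge\rho_n)\|_{C([a,a+T_0];E)}^p\le c_\ast\Big(1+\expect\|u(a\wedge\rho_n)\|_E^p+C(T_0)\,\expect\|u(\cdot\wedge\rho_n)\|_{C([a,a+T_0];E)}^p\Big)
\]
with $c_\ast$ independent of $n$, $a$ and $T_0$. Choosing $T_0$ once and for all so small that $c_\ast C(T_0)\le\tfrac12$, and noting that the left-hand side is a priori finite (it is $\le n^p$ by the definition of $\rho_n$), one absorbs the last term to obtain $\expect\|u(\cdot\wedge\rho_n)\|_{C([a,a+T_0];E)}^p\le 2c_\ast\big(1+\expect\|u(a\wedge\rho_n)\|_E^p\big)+1$ uniformly in $n$; starting from $\expect\|u(0)\|_E^p=\|\xi\|_E^p$ and iterating over $a=0,T_0,2T_0,\dots$ the $\lceil T/T_0\rceil$ times needed to cover $[0,T]$ gives $\sup_n\expect\|u(\cdot\wedge\rho_n)\|_{C([0,T];E)}^p<\infty$.

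Chebyshev's inequality then gives $\P(\rho_n<T)\le n^{-p}\sup_m\expect\|u(\cdot\wedge\rho_m)\|_{C([0,T];E)}^p\to0$, so $\P(\rho<T)=0$; by the analogue of \cite[Corollary 2.6]{KvN12} the maximal solution $u$ lies in $L^p(\Omega;C([0,T];E))$ and is a genuine solution of \eqref{eq.rds} on $[0,T]$. Uniqueness on $[0,T]$ is Theorem \ref{t.uniqueness}, concatenating the solutions on $[0,T],[T,2T],\dots$ produces a global solution, and the extension from deterministic $\xi$ to $\cF_0$-measurable random $\xi$ is automatic (see \cite{Kmartingale}). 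Apart from the bootstrap above, the points requiring care are checking that the truncated coefficients genuinely satisfy (F) and (G) so that Theorems \ref{t.ex1} and \ref{t.uniqueness} apply, and that the localization is compatible with Lemma \ref{l.convest} — i.e.\ the stopped integrands keep the linear-growth bounds with $n$-independent constants, which is exactly what legitimizes the absorption step.
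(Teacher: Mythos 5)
Your proposal is correct and follows essentially the same route as the paper: truncate only the linear-growth parts $K_l$ and $G_l$, invoke Theorem \ref{t.ex1} plus pathwise uniqueness (Yamada--Watanabe) to realize and glue the approximate solutions, and then obtain the uniform $L^p$-bound by splitting off the deterministic and stochastic convolutions (Lemma \ref{l.convest}) and controlling the remaining $H_l$-part via Lemma \ref{l.est2}, absorbing for small $T_0$ and iterating in time. Your minor variations -- coordinatewise rather than radial truncation, applying Lemma \ref{l.convest} to the stopped process with the original coefficients, and iterating over subintervals directly instead of restarting with a shifted Wiener process -- are all legitimate and do not change the substance of the argument.
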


\begin{proof}
To proof existence of solutions we approximate the coefficients $G_l$ and $K_l$ with bounded coefficients. To that end, 
we define $k_l^{(n)}$ by setting $k_l^{(n)}(x,s) := k_l(x,s)$ if $\|s\|_1 \leq n$ and $k_l^{(n)}(x,s) := k_l(x, n\|s\|_1^{-1}s)$ otherwise.
The associated composition operator is denoted by $K_l^{(n)}$.
The functions $g_l^{(n)}$ and the associated composition operator are defined analogously. Clearly, $K_l^{(n)}$ and $G_l^{(n)}$ are uniformly 
bounded. Moreover, they are of linear growth with constants independent of $n$.

Setting $F_l^{(n)} := H_l + K_l^{(n)}$ and $F^{(n)}:= (F_1^{(n)}, \ldots, F_r^{(n)})$, it follows from Theorem \ref{t.ex1} that there exists a solution 
$u_n = (u_1^{(n)}, \ldots, u_r^{(n)})$ of equation \eqref{eq.scp} with coefficients $A, F^{(n)}$ and $G^{(n)}:= (G_1^{(n)}, \ldots, 
G_r^{(n)})$. Note that in Theorem \ref{t.ex1}, we have proved existence of solutions only for deterministic initial data. However, as we have 
already remarked in Section \ref{ss.p}, given pathwise uniqueness we obtain existence of solutions for arbitrary $\cF_0$-measurable initial 
data $\xi$.

Proceeding as in the proof of Theorem \ref{t.ex1}, we can glue together the solutions $u_n$ to a maximal solution of our original 
problem and  finish the proof of existence by proving that the approximative solutions $u_n$ for initial data $\xi \in L^p(\Omega, \cF_0, \PP;  E)$ 
are uniformly bounded in $L^p(\Omega; C([0,T]; E))$. This will then prove existence of solutions for $\xi \in L^p(\Omega, \cF_0, \PP;  E)$ . The abstract results of \cite{KvN12} will then yield 
existence of solutions for $\cF_0$-measurable initial 
data $\xi$.\medskip 

To prove such a bound, we introduce the processes
\[
v_l^{(n)} := S_l \ast K_l^{(n)}(u_n) \quad \mbox{and}\quad w_l^{(n)} := S_l \diamond_l G^{(n)}(u_l^{(n)})\, .
\]
Since the maps $K_l^{(n)}$ and $G^{(n)}_l$ are of linear growth with constants independent of $n$, it follows from 
Lemma \ref{l.convest}  that for certain constants $d_1, d_2$ and a constant $C(T)$ which converges to zero as $T \to 0$ we have 
\[
\expect \|w_l^{(n)}\|_{C([0,T]; C(\overline{\OO}))}^p \leq C(T)(d_1 + d_2\expect \|u_l^{(n)}\|_{C([0,T]; C(\overline{\OO})}^p)
\]
and 
\[
\expect \|v_l^{(n)}\|_{C([0,T]; C(\overline{\OO}))}^p \leq C(T)\Big(d_1 + d_2\expect \sum_{l=1}^r\|u_l^{(n)}\|_{C([0,T]; C(\overline{\OO})}^p\Big) \, .
\]
Now we put $y^{(n)}_l := u_l^{(n)} -  v_l^{(n)}  -w_l^{(n)}-S_l(\cdot)\xi_l$. Since $u_n$ is a mild solution of the approximate problem, 
\[
y_l^{(n)}(t) = \int_0^t S_l(t-s)H_l\big[y_l^{(n)}(s) + v_l^{(n)}(t)  + w_l^{(n)}(s) + S_l(s)\xi_l\big]\, ds\, .
\]
Consequently, by Lemma \ref{l.est2},
\begin{eqnarray*}
&& \expect \|y_l^{(n)}\|_{C([0,T]; C(\overline{\OO}))}^p\\
& \lesssim & 1+\expect \|v_l^{(n)}  + w_l^{(n)} + S_l(\cdot)\xi_l\|_{C([0,T]; C(\overline{\OO}))}^p\\
& \lesssim & 1 + \expect \|v_l\|_{C(\overline{\OO})}^p + \expect \|v_l^{(n)}\|_{C([0,T]; C(\overline{\OO}))}^p + \expect \|w_l^{(n)}\|_{C([0,T]; C(\overline{\OO}))}^p\\
& \leq & 1 + \expect \|v_l\|_{C(\overline{\OO})}^p + 2C(T)\Big(d_1 + d_2\expect \sum_{l=1}^r\|u_l^{(n)}\|_{C([0,T]; C(\overline{\OO}))}^p\Big) 
\end{eqnarray*}
Combining with the estimates for $v_l^{(n)}$ and $w_l^{(n)}$ and summing over $l=1, \ldots, r$, we obtain 
\[
\expect \|u_n\|_{C([0,T]; E)}^p
\leq C_0 +C_1(T)\expect \|\xi_l\|_E^p  + C_2(T)\expect \|u_n\|_{C([0,T]; E)}^p
\]
for certain constants $C_0, C_1(T)$ and $C_2(T)$, where $C_1(T), C_2(T)$ converge to 0 as $t\downarrow 0$.
Thus, if we choose $T_0$ so small that $C_2(T_0) < 1$, we obtain
\[
\expect \|u_n\|_{C([0,T_0]; C(\overline{\OO})^r)}^p \leq \frac{1}{1-C_2(T_0)}(C_0 + C_1(T_0)\expect \|\xi_l\|_{E}^p).
\]
Thus we have obtained the uniform $L^p$-bound for small time intervals. Solving with initial datum $\tilde{\xi}:= u(T_0)$ and with cylindrical 
Wiener process $\tilde{W}_H(t) := W_H(t+T_0) -W_H(T_0)$, we obtain solutions $\tilde{u}_n$. By pathwise uniqueness, $\tilde{u}_n(\cdot)
= u_n (\cdot + T_0)$. The same arguments as above yields boundedness of $\tilde{u}_n$ in $L^p(\Omega; C([0,T_0]; E))$, hence 
boundedness of $u_n$ in $L^p(\Omega; C([0,2T_0]; E))$. Inductively, we obtain boundedness in $L^p(\Omega; C([0,T]; E))$ for all $T>0$.
\end{proof}

\section{An example}\label{s.fhn}

In this section, apply our results to specific stochastic reaction diffusion system by presenting a typical example for the reaction term $f$.
More precisely, we consider the situation where $r=2$, rename the variables $u:= u_1$ and $v=v_2$ and consider the 
function $f: \CR^2 \to \CR^2$, defined by
\[
f_1(u,v) := u-u^3 + v \quad\mbox{and}\quad f_2(u,v) := au-bv
\]
where $a$ and $b$ are positive constants. This choice for the reaction term goes back to the fundamental work of Fitzhugh \cite{fitzhugh}
and Nagumo, Arimolo and Yoshizawa \cite{nagumo}. Reaction-diffusion equations involving this reaction term are generic excitable 
systems and appear frequently in applications in chemistry and biology, see e.g.\ \cite{murray}.

This nonlinearity satisfies our assumptions (F). Indeed, setting $h_1(s) := s-s^3$ and $k_1(u,v) := v$, then clearly $k_1$ is Lipschitz continuous 
and of linear growth. It follows from Example \ref{ex.f}
that $f_1$ satisfies (F1) and (F2). Moreover, can put $h_2 \equiv 0$, which trivially satisfies (F1) and (F2), and $h_2(u,v) := u-v$. We should 
also note that the reaction term $f$ is quasi positive, as $f_1(0,v) = v \geq 0$ for $v \geq 0$ and $f_2(u,0) = au \geq 0$ for $u \geq 0$.

Thus, our results immediately yield the following

\begin{thm}\label{t.fhn}
Let $\OO$ be a bounded Lipschitz domain in $\CR^d$, $a_1, a_2 \in L^\infty (\OO; \CR^{d\times d})$ be symmetric and uniformly elliptic.
Moreover, let $R_1, R_2$ be Hilbert-Schmidt operators on $L^2(\OO)$  which are diagonalized by orthonormal bases $(e_k)$ resp.\ $(\tilde{e}_k)$
such that $e_k, \tilde{e}_k \in C(\overline{\OO})$ for all $k$ and $\sum\|R_1e_k\|_\infty^2, \sum\|R_2e_k\|_\infty^2 < \infty$. Finally, let 
$g_1,g_2 : \CR\to \CR$ be locally $\half$-H\"older continuous and of linear growth. Let $(\Omega, \Sigma, \FF, \P)$ be a stochastic basis on which two independent $L^2(\OO)$-cylindrical Wiener processes $W_1$ and $W_2$ are 
defined. 

Then for every $\xi_1, \xi_2 \in L^0(\Omega, \cF_0, \P; C(\overline{\OO}))$, there exist a pathwise unique solution $(u,v)$ of the stochastic 
reaction diffusion system
\[
\left\{
\begin{array}{lll}
d u(t) & = & \big[ \div (a_1\nabla u (t)) + u(t) - u(t)^3 + v(t)\big] dt+ g_1(u(t))R_1dW_1(t)\\

d v(t) & = & \big[ \div (a_2\nabla v (t)) + au(t) - bv(t)\big]dt + g_2(u(t))R_2dW_2(t)\\

\frac{\partial u(t)}{\partial \nu_{a_1}} = \frac{\partial v(t)}{\partial \nu_{a_2}}  & = & 0 \quad \mbox{on } \partial \OO\\
u(0) & = & \xi_1\\
v(0) & = & \xi_2\, .
\end{array}
\right.
\]
If $\xi_1, \xi_2$ are almost surely positive and $g_1(0) = g_2(0) = 0$,
then the solutions $u, v$ are almost surely positive for all times.
\end{thm}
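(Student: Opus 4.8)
The plan is to verify that the Fitzhugh--Nagumo system above is a special case of \eqref{eq.rds} with $r=2$, namely that it satisfies the three standing hypotheses (A), (F) and (G), and then to read off the conclusions from Theorems \ref{t.uniqueness}, \ref{t.ex2} and \ref{t.positive}. In other words, the entire proof consists of checking that the abstract framework applies; no further analysis is required.

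First I would dispatch (A) and (F). Assumption (A) is immediate from the hypotheses on $\OO$ and on $a_1, a_2$, taking $c_1 = c_2 = 0$. For (F), I would decompose $f_1(u,v) = h_1(u) + k_1(u,v)$ with $h_1(s):= s - s^3$ and $k_1(u,v) := v$, and write $f_2 = h_2 + k_2$ with $h_2 \equiv 0$ and $k_2(u,v) := au - bv$. The maps $k_1, k_2$ are globally Lipschitz and of linear growth, hence meet the requirements imposed on the $k_l$ in (F). Since $h_1$ is a cubic with strictly negative leading coefficient, Example \ref{ex.f} shows that it satisfies (F1) and (F2) (with $N_1 = 1$), while $h_2 \equiv 0$ satisfies (F1) and (F2) trivially. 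This establishes (F).

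The one point requiring a little care is (G), and here I would appeal to Example \ref{ex.r} applied componentwise. For $k \in \CN$ put $g_{1,k}(x,s) := g_1(s)\,(R_1 e_k)(x)$ and $g_{2,k}(x,s) := g_2(s)\,(R_2 \tilde e_k)(x)$. Since $R_1$ is diagonalized by $(e_k)$, the function $R_1 e_k$ is a scalar multiple of $e_k$, lies in $C(\overline{\OO})$, and $\sum_k \|R_1 e_k\|_\infty^2 < \infty$ by assumption; similarly for $R_2$. As $g_1$ and $g_2$ are of linear growth and locally $\half$-H\"older continuous, Example \ref{ex.r} shows that the $g_{1,k}$ and $g_{2,k}$ satisfy (G1) and (G2); in particular the associated functions $\rho_{l,m}$ are linear in $s$, so $1/\rho_{l,m}$ fails to be integrable at $0$. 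Writing $\beta_{1,k}$ and $\beta_{2,k}$ for the one-dimensional Brownian motions extracted from the cylindrical Wiener processes $W_1$ and $W_2$ along the bases $(e_k)$ and $(\tilde e_k)$, the assumed independence of $W_1$ and $W_2$ makes $(\beta_{1,k},\beta_{2,k})_{k \in \CN}$ a family of independent standard Brownian motions, and the noise term in the $i$-th equation becomes exactly $\sum_k g_{i,k}(x,\cdot)\,d\beta_{i,k}$ as in \eqref{eq.rds}. Thus the system is an instance of \eqref{eq.rds} satisfying (A), (F) and (G), and Theorems \ref{t.ex2} and \ref{t.uniqueness} yield, on the given stochastic basis, a pathwise unique solution $(u,v)$ for every $\cF_0$-measurable initial datum $(\xi_1, \xi_2) \in C(\overline{\OO})^2$.

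For the positivity claim I would invoke Theorem \ref{t.positive}. The reaction term $f = (f_1,f_2)$ is quasi positive, since $f_1(x,0,v) = v \geq 0$ for $v \geq 0$ and $f_2(x,u,0) = au \geq 0$ for $u \geq 0$ (as $a > 0$). If moreover $g_1(0) = g_2(0) = 0$, then $g_{i,k}(x,0) = g_i(0)(R_i e_k)(x) = 0$ for all $i,k$ and $x$, so the extra hypotheses of Theorem \ref{t.positive} are met; applied to the almost surely nonnegative data $\xi_1,\xi_2$ it gives $u(t,x), v(t,x) \geq 0$ almost surely for all $t \geq 0$ and $x \in \overline{\OO}$. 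The only genuinely non-bookkeeping step is recasting the colored multiplicative noise $g_i(\cdot)R_i\,dW_i$ in the diagonal form of \eqref{eq.rds} together with the verification of the integrability condition in (G2b); both were already carried out in Example \ref{ex.r}.
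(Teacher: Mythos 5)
Your proposal is correct and follows essentially the same route as the paper: decomposing $f_1 = h_1 + k_1$ with $h_1(s)=s-s^3$, $k_1(u,v)=v$ and $f_2 = h_2 + k_2$ with $h_2\equiv 0$, verifying (F) via Example \ref{ex.f}, casting the noise $g_i(\cdot)R_i\,dW_i$ in the diagonal form of \eqref{eq.rds} via Example \ref{ex.r}, and then invoking Theorems \ref{t.uniqueness}, \ref{t.ex2} and \ref{t.positive} together with the quasi-positivity of $f$ and $g_{i,k}(x,0)=0$. No gaps; this matches the paper's argument.
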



\begin{thebibliography}{10}

\bibitem{arendtsurv}
{\sc W.~Arendt}, {\em Semigroups and evolution equations: functional calculus,
  regularity and kernel estimates}, in Evolutionary equations. {V}ol. {I},
  Handb. Differ. Equ., North-Holland, Amsterdam, 2004, pp.~1--85.

\bibitem{bgp94}
{\sc V.~Bally, I.~Gy{\"o}ngy, and {\'E}.~Pardoux}, {\em White noise driven
  parabolic {SPDE}s with measurable drift}, J. Funct. Anal., 120 (1994),
  pp.~484--510.

\bibitem{bg99}
{\sc Z.~Brze{\'z}niak and D.~G{\c{a}}tarek}, {\em Martingale solutions and
  invariant measures for stochastic evolution equations in {B}anach spaces},
  Stochastic Process. Appl., 84 (1999), pp.~187--225.

\bibitem{bp99}
{\sc Z.~Brze{\'z}niak and S.~Peszat}, {\em Space-time continuous solutions to
  {SPDE}'s driven by a homogeneous {W}iener process}, Studia Math., 137 (1999),
  pp.~261--299.

\bibitem{Cerrai}
{\sc S.~Cerrai}, {\em Stochastic reaction-diffusion systems with multiplicative
  noise and non-{L}ipschitz reaction term}, Probab. Theory Related Fields, 125
  (2003), pp.~271--304.

\bibitem{dpkz}
{\sc G.~Da~Prato, S.~Kwapie{\'n}, and J.~Zabczyk}, {\em Regularity of solutions
  of linear stochastic equations in {H}ilbert spaces}, Stochastics, 23 (1987),
  pp.~1--23.

\bibitem{dpz92}
{\sc G.~Da~Prato and J.~Zabczyk}, {\em Nonexplosion, boundedness, and
  ergodicity for stochastic semilinear equations}, J. Differential Equations,
  98 (1992), pp.~181--195.

\bibitem{dprzab2}
\leavevmode\vrule height 2pt depth -1.6pt width 23pt, {\em Stochastic equations
  in infinite dimensions}, vol.~44 of Encyclopedia of Mathematics and its
  Applications, Cambridge University Press, Cambridge, 1992.

\bibitem{fitzhugh}
{\sc R.~Fitzhugh}, {\em Impulses and physiological states in theoretical models
  of nerve membrane}, Biophys. J., 1 (1961), pp.~445--466.

\bibitem{ft96}
{\sc M.~Fukushima and M.~Tomisaki}, {\em Construction and decomposition of
  reflecting diffusions on {L}ipschitz domains with {H}\"older cusps}, Probab.
  Theory Related Fields, 106 (1996), pp.~521--557.

\bibitem{gp93}
{\sc I.~Gy{\"o}ngy and {\'E}.~Pardoux}, {\em On quasi-linear stochastic partial
  differential equations}, Probab. Theory Related Fields, 94 (1993),
  pp.~413--425.

\bibitem{gp93a}
\leavevmode\vrule height 2pt depth -1.6pt width 23pt, {\em On the
  regularization effect of space-time white noise on quasi-linear parabolic
  partial differential equations}, Probab. Theory Related Fields, 97 (1993),
  pp.~211--229.

\bibitem{Kmartingale}
{\sc M.~C. Kunze}, {\em Martingale problems on {B}anach spaces}.
\newblock preprint. arXiv:1009.2650, 2012.

\bibitem{KvN12}
{\sc M.~C. Kunze and J.~M.~A.~M. {\noopsort{Neerven}}{van Neerven}}, {\em
  Continuous dependence on the coefficients and global existence for stochastic
  reaction diffusion equations}, J. Differential Equations, 253 (2012),
  pp.~1036--1068.

\bibitem{lunardi}
{\sc A.~Lunardi}, {\em Analytic semigroups and optimal regularity in parabolic
  problems}, Progress in Nonlinear Differential Equations and their
  Applications, 16, Birkh\"auser Verlag, Basel, 1995.

\bibitem{mz99}
{\sc R.~Manthey and T.~Zausinger}, {\em Stochastic evolution equations in
  {$L^{2\nu}_\rho$}}, Stochastics Stochastics Rep., 66 (1999), pp.~37--85.

\bibitem{mmp12}
{\sc C.~Mueller, L.~Mytnik, and E.~Perkins}, {\em Nonuniqueness for a parabolic
  {SPDE} with $\frac{3}{4}-\eps$-{H}\"older diffusion coefficients}.
\newblock preprint. arXiv:1201.2767, 2012.

\bibitem{mt95}
{\sc C.~M{\"u}ller and R.~Tribe}, {\em Stochastic p.d.e.'s arising from the
  long range contact and long range voter processes}, Probab. Theory Related
  Fields, 102 (1995), pp.~519--545.

\bibitem{murray}
{\sc J.~D. Murray}, {\em Mathematical biology}, vol.~19 of Biomathematics,
  Springer-Verlag, Berlin, second~ed., 1993.

\bibitem{mp11}
{\sc L.~Mytnik and E.~Perkins}, {\em Pathwise uniqueness for stochastic heat
  equations with {H}\"older continuous coefficients: the white noise case},
  Probab. Theory Related Fields, 149 (2011), pp.~1--96.

\bibitem{mps06}
{\sc L.~Mytnik, E.~Perkins, and A.~Sturm}, {\em On pathwise uniqueness for
  stochastic heat equations with non-{L}ipschitz coefficients}, Ann. Probab.,
  34 (2006), pp.~1910--1959.

\bibitem{nagumo}
{\sc J.~Nagumo, S.~Arimolo, and S.~Yoshizawa}, {\em An active pulse
  transmission line simulating nerve axon}, Proc. IRE, 50 (1962),
  p.~2061‚Äì2071.

\bibitem{vNsurvey}
{\sc J.~M. A.~M. {\noopsort{Neerven}}{van Neerven}}, {\em $\g$-{R}adonifying
  operators: a survey}, Proceedings of the CMA, 44 (2010), pp.~1--62.

\bibitem{vNVW07}
{\sc J.~M. A.~M. {\noopsort{Neerven}}{van Neerven}, M.~C. Veraar, and L.~W.
  Weis}, {\em Stochastic integration in {UMD} {B}anach spaces}, Ann. Probab.,
  35 (2007), pp.~1438--1478.

\bibitem{vNVW08}
\leavevmode\vrule height 2pt depth -1.6pt width 23pt, {\em Stochastic evolution
  equations in {UMD} {B}anach spaces}, J. Funct. Anal., 255 (2008),
  pp.~940--993.

\bibitem{ouhabaz}
{\sc E.~M. Ouhabaz}, {\em Analysis of heat equations on domains}, vol.~31 of
  London Mathematical Society Monographs Series, Princeton University Press,
  Princeton, NJ, 2005.

\bibitem{p95}
{\sc S.~Peszat}, {\em Existence and uniqueness of the solution for stochastic
  equations on {B}anach spaces}, Stochastics Stochastics Rep., 55 (1995),
  pp.~167--193.

\bibitem{p10}
{\sc M.~Pierre}, {\em Global existence in reaction-diffusion systems with
  control of mass: a survey}, Milan J. Math., 78 (2010), pp.~417--455.

\bibitem{yw1}
{\sc T.~Yamada and S.~Watanabe}, {\em On the uniqueness of solutions of
  stochastic differential equations.}, J. Math. Kyoto Univ., 11 (1971),
  pp.~155--167.

\end{thebibliography}
\end{document}